\renewcommand{\Re}{\text{Re}}
\renewcommand{\Im}{\text{Im}}
\title{Pretentiously detecting power cancellation}
\author{Junehyuk Jung}
\address{Princeton University, Department of Mathematics, Princeton, NJ 08544}
\email{junehyuk@math.princeton.edu}
\author{Robert J. Lemke Oliver}
\address{Emory University, Department of Mathematics and Computer Science, 400 Dowman Drive, Atlanta, GA 30322}
\email{rlemkeo@emory.edu}
\newtheorem{prop}{Proposition}
\newtheorem{theorem}{Theorem}
\newtheorem{lemma}{Lemma}
\newtheorem{question}{Question}
\theoremstyle{remark}\newtheorem*{remark*}{Remark}
\numberwithin{equation}{section}
\numberwithin{theorem}{section}
\numberwithin{prop}{section}
\numberwithin{lemma}{section}
\begin{document}

\begin{abstract}
Granville and Soundararajan have recently introduced the notion of \emph{pretentiousness} in the study of multiplicative functions of modulus bounded by $1$, essentially the idea that two functions which are similar in a precise sense should exhibit similar behavior.  It turns out, somewhat surprisingly, that this does not directly extend to detecting power cancellation - there are multiplicative functions which exhibit as much cancellation as possible in their partial sums that, modified slightly, give rise to functions which exhibit almost as little as possible.  We develop two new notions of pretentiousness under which power cancellation can be detected, one of which applies to a much broader class of multiplicative functions.
\end{abstract}

\maketitle

\section{Introduction and statement of results}

In a series of papers, Granville and Soundararajan (\cite{GS2001}, \cite{GS2003}, \cite{GS2007}, \cite{GS2008}, \cite{Granville2009} as a few examples) recently introduced the notion of \emph{pretentiousness} in the study of multiplicative functions taking values in the complex unit disc, essentially the idea that if two functions are ``close" in some sense, they should exhibit the same behavior.  One striking example of this philosophy is a theorem of Hal\'{a}sz \cite{Halasz1968}, which can be interpreted as saying that given a multiplicative function $f(n)$ with $|f(n)|\leq 1$ for all $n$, the partial sums
\[S_f(x):=\sum_{n\leq x} f(n)\]
are large if and only if $f(n)$ ``pretends" to be $n^{it}$ for some $t\in\mathbb{R}$ (possibly 0).  To make this precise, define the distance between two multiplicative functions $f(n)$ and $g(n)$ taking values in the complex unit disc to be
\[\mathbb{D}(f,g)^2:=\sum_{p}\frac{1-\Re(f(p)\bar{g}(p))}{p},\]
where here and throughout, the summation over $p$ is taken to be over primes.  This distance is typically infinite, but in the event that it is finite, we follow Granville and Soundararajan and say that $f(n)$ and $g(n)$ are pretentious to each other, or that $f(n)$ is $g(n)$-pretentious.  Hal\'{a}sz's theorem then says that if $S_f(x)\gg x$, then $f(n)$ must be $n^{it}$-pretentious for some $t$.  In other words, Hal\'{a}sz's theorem classifies those $f(n)$ for which $S_f(x)$ is as large as possible.  It is therefore natural to ask for which $f(n)$ we have that $S_f(x)$ is exceptionally small.  Since for generic $f(n)$ taking values in the complex unit disc, the best we can typically hope for is $S_f(x)\ll_\epsilon x^{1/2+\epsilon}$, we are interested in when $S_f(x)$ exhibits more than squareroot cancellation.  In particular, we ask the following question.

\begin{question}
\label{question:degree1}
If $f(n)$ is a completely multiplicative function, bounded by $1$ in absolute value, such that both $\sum_{n\leq x} |f(n)|^2 \gg x$ and $S_f(x)\ll x^{\frac{1}{2}-\delta}$ hold for some fixed $\delta>0$, must $f(n)$ be $\chi(n)n^{it}$-pretentious for some Dirichlet character $\chi$ and some $t\in\mathbb{R}$?
\end{question}

The reason for the condition that
\[\sum_{n\leq x} |f(n)|^2 \gg x\]
is twofold.  First, we wish to exclude functions like $f(n)=n^{-a}$ for some $a>0$, and second, this condition is necessary for $\mathbb{D}(f,f)$ to be finite, and therefore for $f(n)$ to be pretentious to any function.  In other words, this condition is necessary for $f(n)$ to fit into the context of pretentiousness.

To study Question \ref{question:degree1}, we first ask that if $f(n)$ is $\chi(n)$-pretentious for some character $\chi$, must $S_f(x)$ be small?  This turns out to not be the case - by taking $f(p)$ to be 1 for primes lying in one of a suitably sparse set of dyadic intervals and to be $\chi(p)$ otherwise, one obtains a function which is $\chi(n)$-pretentious, but for which $S_f(x)\gg x/\log x$ for infinitely many $x$.  We therefore must ask whether there is a stronger notion of pretentiousness which preserves power savings.  To that end, given $\beta\in(0,1]$, define the $\beta$-distance between $f(n)$ and $g(n)$ to be
\[\mathbb{D}_\beta(f,g)^2:=\sum_p \frac{1-\Re(f(p)\bar{g}(p))}{p^\beta},\]
and say that $f(n)$ and $g(n)$ are $\beta$-pretentious if $\mathbb{D}_\beta(f,g)$ is finite.  A natural guess would be that $\beta$-pretentiousness detects power savings, perhaps even down to $O(x^\beta)$.  Our first theorem shows that the first part of this guess is correct, but that the second is not.  To state it precisely, let $h(n)$ be the multiplicative function defined by the Dirichlet convolution 
\[g(n)=(f*h)(n)=\sum_{dm = n} f(d)h(m).\]  
From our notion of pretentiousness, it is natural to expect that if $f(n)$ and $g(n)$ are $\beta$-pretentious for some $\beta$, then $h(n)$ should be small in some sense (colloquially, we would say that $f(n)$ needs to be modified only slightly at the primes to obtain $g(n)$).  In practice, this smallness manifests itself as convergence properties of the Dirichlet series associated to $h(n)$.  In particular, we have the following, letting
\[H(\sigma):=\sum_{p\leq 4^{1/\sigma}} \sum_{k=0}^\infty\frac{|h(p^k)|^2}{p^{k\sigma}}.\]

\begin{theorem}
\label{thm:beta1}
Let $f(n)$ and $g(n)$ be multiplicative functions taking values in the complex unit disc such that $S_f(x)\ll x^\alpha$ for some $\alpha<1$ as $x\to\infty$, and suppose that $\mathbb{D}_\beta(f,g)<\infty$ for some $\beta\in (0,1]$.  
\begin{enumerate}
\item If $\sigma>3/4$ is such that $\sigma\geq \max(\alpha,(1+\beta)/2)$ and $H(2\sigma-1)$ is convergent, then $S_g(x)\ll x^\sigma$ as $x\to\infty$.  
\item If $f(n)$ and $g(n)$ are both completely multiplicative, then $S_g(x)\ll x^{\max(\alpha,(1+\beta)/2)}$.  Moreover, this bound is optimal in the following sense.  If $\beta \geq 2\alpha - 1$, there is a completely multiplicative function $f^\prime(n)$ that is $\beta$-pretentious to $f(n)$ and is such that $S_{f^\prime}(x)$ is not $O\left(x^{\frac{1+\beta}{2}-\epsilon}\right)$ for any $\epsilon>0$.
\end{enumerate}
\end{theorem}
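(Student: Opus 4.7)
My plan is to exploit the Dirichlet convolution identity $g = f \ast h$ to write $S_g$ as a convolution involving $S_f$, and then reduce the resulting $L^1$ bound on $h$ to an $L^2$ Euler product estimate via Cauchy--Schwarz. Concretely,
\[
S_g(x) \;=\; \sum_{m \le x} h(m)\, S_f(x/m),
\]
and the hypothesis $|S_f(y)| \ll y^\alpha$ yields $|S_g(x)| \ll x^\alpha \sum_{m \le x} |h(m)|\,m^{-\alpha}$. Writing $|h(m)|\,m^{-\alpha} = \bigl(|h(m)|\,m^{-(\sigma-1/2)}\bigr) \cdot m^{\sigma-1/2-\alpha}$ and applying Cauchy--Schwarz then gives, for $\sigma > \alpha$,
\[
\sum_{m \le x} \frac{|h(m)|}{m^\alpha} \;\ll\; x^{\sigma-\alpha}\Bigl(\sum_{m=1}^\infty \frac{|h(m)|^2}{m^{2\sigma-1}}\Bigr)^{1/2},
\]
so the whole problem reduces to controlling the $L^2$ series.

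Since $|h|^2$ is multiplicative, this series factors as an Euler product, which I split at $p = 4^{1/(2\sigma-1)}$: the small-prime block is exactly $H(2\sigma-1)$, finite by hypothesis. For the large-prime block I use two ingredients. First, the universal bound $|h(p^k)| \le 2^k$, proved by induction from $h = g \ast f^{-1}$ and $|f|,|g|\le 1$; this controls the $k\ge 2$ terms by $O(p^{-2(2\sigma-1)})$, whose sum over $p$ converges precisely when $\sigma > 3/4$. Second, the identity $|h(p)|^2 = |g(p)-f(p)|^2 \le 2(1-\Re(f(p)\bar g(p)))$ controls the $k=1$ terms, with $\sum_p |h(p)|^2/p^{2\sigma-1} \le 2\mathbb{D}_\beta(f,g)^2$ whenever $2\sigma-1 \ge \beta$, i.e.\ $\sigma \ge (1+\beta)/2$. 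Assembling these three ingredients yields part (1).

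For part (2), complete multiplicativity provides the exact local formula $h(p^k) = g(p)^{k-1}(g(p)-f(p))$, so $|h(p^k)| \le |g(p)-f(p)|$ uniformly in $k\ge 1$. This replaces the $4^k$ growth by a uniform bound, removes the role of the $\sigma>3/4$ threshold, and makes the Euler product converge as soon as $\sigma \ge (1+\beta)/2$. When $(1+\beta)/2 > \alpha$ this yields $|S_g(x)| \ll x^{(1+\beta)/2}$ at once; when $\alpha > (1+\beta)/2$, Cauchy--Schwarz against $\mathbb{D}_\beta$ gives $\sum_p |g(p)-f(p)|\,p^{-\alpha} < \infty$ (using $2\alpha-\beta > 1$), whence $\sum_m |h(m)|\,m^{-\alpha}$ converges absolutely and the convolution bound yields $|S_g(x)| \ll x^\alpha$.

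For the optimality claim I adapt the dyadic-interval construction sketched earlier in the paper: fix a rapidly growing sequence $\{X_k\}$ and let $f'$ be the completely multiplicative function with $f'(p) = 1$ on $\bigcup_k(X_k, 2X_k]$ and $f'(p) = f(p)$ elsewhere. Taking $\{X_k\}$ sparse enough (e.g.\ doubly exponential) ensures $\sum_k X_k^{1-\beta}/\log X_k < \infty$ and hence $\mathbb{D}_\beta(f,f') < \infty$; meanwhile, at $x = 2X_k$ the modified primes contribute $\gg X_k/\log X_k$ to $S_{f'}(2X_k)$, which exceeds $X_k^{(1+\beta)/2-\epsilon}$ for every $\epsilon > 0$ once $\beta < 1$ (the endpoint $\beta=1$ requiring a minor oscillatory variant). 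The main obstacle I anticipate is the careful large-prime bookkeeping in the Euler product, especially verifying that the bound $|h(p^k)| \le 2^k$ is essentially sharp in the general multiplicative setting so that the $\sigma>3/4$ threshold in part (1) really does vanish under complete multiplicativity, and — in the optimality construction — ensuring that the contribution from the modified primes at $x = 2X_k$ does not accidentally cancel against $S_f(2X_k)$.
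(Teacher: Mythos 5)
Your treatment of the cancellation claims in parts (1) and (2) matches the paper's: you use the convolution identity $S_g(x)=\sum_{m\le x}h(m)S_f(x/m)$, Cauchy--Schwarz, and an Euler-product analysis of $\sum_m|h(m)|^2/m^{2\sigma-1}$, with the universal bound $|h(p^k)|\le 2^k$ (forcing $\sigma>3/4$) in the general case and the local identity $h(p^k)=g(p)^{k-1}(g(p)-f(p))$, hence $|h(p^k)|\le|g(p)-f(p)|$, in the completely multiplicative case. This is the content of the paper's Proposition~\ref{prop:cancellation} and Lemmas~\ref{lem:h1}--\ref{lem:h2}, modulo a harmless relabeling of $\sigma$.

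The optimality construction, however, does not work, and the failure is instructive. If $f'(p)=1$ on $\bigcup_k(X_k,2X_k]$ and $f'(p)=f(p)$ elsewhere, then
\[
\mathbb{D}_\beta(f,f')^2=\sum_k\sum_{X_k<p\le 2X_k}\frac{1-\Re(f(p))}{p^\beta}\asymp\sum_k\frac{X_k^{1-\beta}}{\log X_k},
\]
and since $\beta<1$ in the interesting range, the individual summands tend to infinity with $k$; no sparsity of $\{X_k\}$ can rescue convergence, so your claimed criterion $\sum_k X_k^{1-\beta}/\log X_k<\infty$ is vacuous. The dyadic construction you adapted is the one sketched in the introduction, but there it is deployed only against $\mathbb{D}=\mathbb{D}_1$, where the extra $1/\log X_k$ does give convergence after thinning. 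For $\beta<1$ one cannot afford an $O(1)$-sized change at $\gg X_k/\log X_k$ primes.

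The paper's Lemma~\ref{lem:optimality} instead uses a tiny phase perturbation $f'(p)=e\bigl(\omega_p/(p^{(1-\beta)/2}\log\log p)\bigr)f(p)$. The essential point, flagged in Remark~2 after the theorem, is the asymmetry between $1-\Re(f(p)\bar{f'}(p))$, which is quadratic in the perturbation angle $\theta_p$, and $|f'(p)-f(p)|$, which is linear in $\theta_p$. This makes $\mathbb{D}_\beta(f,f')^2\asymp\sum_p 1/(p(\log\log p)^2)$ converge while still allowing $L(s,h)$ to blow up at $s=(1+\beta)/2$, provided the signs $\omega_p$ are chosen so that either $\Re$ or $\Im$ of $\sum_p i\omega_p f(p)/(p^\tau\log\log p)$ diverges as $\tau\to 1^+$. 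That sign-selection step, which guards against accidental cancellation inside $L(s,h)$, is also missing from your sketch. Your bounded modification has no quadratic-vs-linear gap to exploit, so the argument cannot be repaired along the lines you propose.
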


\noindent \emph{Four remarks:}
\begin{enumerate}
\item The condition that the series $H(\sigma)$ is convergent is necessary to control the functions at small primes.  As an illustrative example, consider the multiplicative function $f(n)=(-1)^{n+1}$.  Since $f(n)$ differs from the constant function $\mathbf{1}$ only at the prime $2$, it is $\beta$-pretentious to $\mathbf{1}$ for all $\beta>0$.  We also observe that $S_f(x)=O(1)$, so Theorem \ref{thm:beta1} cannot apply to the pair $f, \mathbf{1}$.  The reason for this is that $h(2^k)=2^k$ for all $k\geq 0$, and so $H(\sigma)$ is divergent for any $\sigma\leq 1$.
\item While it's perhaps unsatisfying that $\beta$-pretentiousness only detects power savings down to $O\left(x^{\frac{1+\beta}{2}}\right)$ even for completely multiplicative functions, we note that the conclusion of the theorem may be able to be strengthened if $f(n)$ and $g(n)$ are assumed to be real-valued.  The reason for this is that our proof of optimality relies crucially on the fact that $1-\Re(f(p)\bar{g}(p))$ can be much smaller than $|f(p)-g(p)|$.
\item As the proof of Theorem \ref{thm:beta1} will show, if we have the stronger condition that $S_f(x)=o(x^\alpha)$, then we may conclude that $S_g(x)=o\left(x^\sigma\right)$.
\item There are quantitative versions of Hal\'{a}sz's theorem, due to Hal\'{a}sz \cite{Halasz1971}, Montgomery \cite{Montgomery1978}, Tenenbaum \cite{Tenenbaum1995}, and Granville and Soundararajan \cite{GS2003} and \cite{GSBook}, but all of these theorems are essentially unable to detect cancellation below $O\left(x\frac{\log\log x}{\log x}\right)$, and so are useless for the question of power cancellation.
\end{enumerate}

In view of Theorem \ref{thm:beta1}, which implies that $\beta$-pretentiousness is enough to detect power savings down to $O(x^{(1+\beta)/2})$, it's natural to ask what happens if $(1+\beta)/2<\alpha$, so that we can detect below the order of magnitude of $S_f(x)$.  That is, supposing we have precise information about $S_f(x)$, can we use $\beta$-pretentiousness to deduce precise information about $S_g(x)$?  This is the content of our second theorem.

\begin{theorem}
\label{thm:asymptotic}
Let $f(n)$ and $g(n)$ be multiplicative, with modulus bounded by 1, and suppose that $f(n)$ satisfies $S_f(x)=x^\alpha \xi(x)$ for some $\xi(x)$ satisfying $\xi(t)\ll_\epsilon t^\epsilon$ for all $\epsilon>0$, and where $\alpha>3/4$ if $f(n)$ is not completely multiplicative.  If $f(n)$ and $g(n)$ are $\beta$-pretentious for some $0<\beta<2\alpha-1$ and the series $H(2\sigma-1)$ is convergent for some $\sigma<\alpha$, then $S_g(x)=x^\alpha \tilde{\xi}(x)$ for an explicitly given $\tilde{\xi}(x)$ satisfying $\tilde{\xi}(t)\ll_\epsilon t^\epsilon$.  Moreover, if $\xi(t)$ satisfies the mean-square lower bound
\[\int_1^T |\xi(t)|^2 dt \gg_\epsilon T^{1-\epsilon},\]
then $\tilde{\xi}(t)$ does as well.  Here, we take $\gg_\epsilon$ to mean $\geq C_\epsilon T^{1-\epsilon}$ for all sufficiently large $T$, rather than merely not $o(T^{1-\epsilon})$.
\end{theorem}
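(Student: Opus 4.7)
The plan is to exploit the Dirichlet convolution identity $g = f*h$, which gives
\[S_g(x) = \sum_{m \leq x} h(m) S_f(x/m).\]
Substituting $S_f(y) = y^\alpha \xi(y)$ yields $S_g(x) = x^\alpha \tilde\xi(x)$ with the explicit formula
\[\tilde\xi(x) := \sum_{m \leq x} \frac{h(m)}{m^\alpha}\, \xi(x/m).\]

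The first step is the pointwise bound $\tilde\xi(x) \ll_\epsilon x^\epsilon$, which, using $\xi(y)\ll_\epsilon y^\epsilon$, reduces to absolute convergence of $\sum_m |h(m)|/m^\alpha$. Since $h(p) = g(p) - f(p)$, one has $|h(p)|^2 = |g(p) - f(p)|^2 \leq 2(1-\Re(f(p)\overline{g(p)}))$, so the $\beta$-pretentiousness gives $\sum_p |h(p)|^2/p^\beta < \infty$, and Cauchy--Schwarz yields
\[\sum_p \frac{|h(p)|}{p^\alpha} \leq \left(\sum_p \frac{|h(p)|^2}{p^\beta}\right)^{1/2}\left(\sum_p \frac{1}{p^{2\alpha-\beta}}\right)^{1/2} < \infty,\]
because $\beta<2\alpha-1$ makes the second factor summable. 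Higher prime-power contributions are controlled at small primes by the hypothesis that $H(2\sigma - 1)$ converges for some $\sigma<\alpha$, while at large primes the $p^k$-terms for $k \geq 2$ sum to a convergent Dirichlet series; the condition $\alpha > 3/4$ in the non-completely-multiplicative case is precisely what makes the $p^2$-terms converge at $s=\alpha$.

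The second step is the mean-square lower bound. I would invert the convolution: since $g = f*h$, we have $f = g*h^{-1}$ where $h^{-1}$ denotes the Dirichlet inverse, so that
\[\xi(x) = \sum_{m\leq x} \frac{h^{-1}(m)}{m^\alpha}\, \tilde\xi(x/m).\]
A parallel analysis, using that $h^{-1}(p) = -h(p)$ and that higher prime-power values of $h^{-1}$ can be bounded recursively from those of $h$, shows $\sum_m |h^{-1}(m)|/m^\alpha < \infty$. Passing to the logarithmic variable $y = \log x$ recasts both of these convolutions as additive convolutions on $\mathbb{R}$ whose Fourier symbol is the Dirichlet series $\mathcal{H}(s) := \sum_m h(m)/m^s$ evaluated at $s = \alpha + it$ (respectively its reciprocal). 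A Plancherel-type argument then relates the mean-square of $\tilde\xi$ on $[1,T]$ to that of $\xi$ via an integral weighted by $|\mathcal{H}(\alpha+it)|^2$, transferring the lower bound.

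I expect the mean-square transfer to be the main obstacle. A naive Minkowski bound applied to $\xi$ expressed via $\tilde\xi$ would demand convergence of $\sum_m |h^{-1}(m)|/m^{\alpha-1/2}$, which the hypotheses do not supply and which typically fails. Instead the Fourier picture is essential: the Euler product of $\mathcal{H}$ is absolutely convergent on a neighbourhood of $\Re s = \alpha$ by the estimates above, and this must be leveraged to prevent $|\mathcal{H}(\alpha+it)|$ from degenerating, so that multiplication by it preserves $L^2$-mass up to an acceptable $O_\epsilon(T^\epsilon)$ loss. Cleanly truncating the $L^2$-integrals on $[1,T]$ while preserving this Fourier analysis will require additional care.
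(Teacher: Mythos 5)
Your first step matches the paper's: the definition of $\tilde\xi$, the pointwise bound $\tilde\xi(x)\ll_\epsilon x^\epsilon$ via Cauchy--Schwarz with the splitting $|h(m)|/m^{\alpha} = (|h(m)|/m^{(2\sigma-1)/2})\cdot m^{(2\sigma-1)/2-\alpha}$, and the use of $\beta<2\alpha-1$ together with $H(2\sigma-1)<\infty$ to make $\sum_m |h(m)|^2/m^{2\sigma-1}$ converge. The M\"obius inversion $\xi(x)=\sum_{m\leq x}\tilde h(m)m^{-\alpha}\tilde\xi(x/m)$ is also the same starting point for the lower bound.

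For the mean-square lower bound, however, your Plancherel route is not carried out, and the gap you yourself flag --- how to truncate to $[1,T]$ while retaining a usable Fourier picture --- is exactly the point where the argument must be completed before it counts as a proof. The paper sidesteps Fourier analysis entirely: it applies Cauchy--Schwarz inside the integrand, writing
\[
|\xi(t)|^2 \le \Bigl(\sum_{m\le t}\tfrac{|\tilde h(m)|^2}{m^\beta}\Bigr)\Bigl(\sum_{m\le t}\tfrac{|\tilde\xi(t/m)|^2}{m^{2\alpha-\beta}}\Bigr),
\]
uses convergence of the first factor (from Lemma \ref{lem:h1} or \ref{lem:h2} applied to $\tilde h$) to pull it out, swaps sum and integral, and after the substitution $u=t/m$ arrives at
\[
T^{1-\epsilon}\ll T^{2-2\alpha+\beta}\int_1^T |\tilde\xi(t)|^2\,\frac{dt}{t^{2-2\alpha+\beta}}.
\]
A H\"older split with conjugate exponents $\tfrac{2}{2\alpha-\beta-1}$ and $\tfrac{2}{3-2\alpha+\beta}$, combined with the already-proved $|\tilde\xi(t)|\ll_\epsilon t^\epsilon$, then isolates $I=\int_1^T|\tilde\xi|^2\,dt$ and yields $I\gg_\epsilon T^{1-\epsilon}$. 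This real-variable argument needs only absolute convergence of $\sum|\tilde h(m)|^2/m^\beta$ and no information about $|L(\alpha+it,h)|$ pointwise in $t$, whereas your Plancherel route would additionally require a uniform lower bound on $|L(\alpha+it,h)|$ (available here from the absolutely convergent Euler product, as in Lemma \ref{lem:nonvanishing}, so not fatal) and, more seriously, a way to handle the truncation error that you do not supply. To turn your sketch into a proof you would need either a large-sieve/mean-value theorem for Dirichlet polynomials adapted to $\tilde\xi$, or the localized Cauchy--Schwarz and H\"older device the paper uses.
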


We have in mind the following two applications of Theorem \ref{thm:asymptotic}:

\begin{enumerate}
\item If $S_f(x)$ satisfies an asymptotic formula, then so does $S_g(x)$.  For example, if the Dirichlet series associated to $f$, $L(s,f)$, has a finite number of poles on the line $\Re(s)=\alpha$ and is otherwise analytic on $\Re(s)>\alpha-\delta$ for some $\delta$, then standard Tauberian theorems (for example, see \cite{MV2007}) show that 
\[S_f(x) = \sum_{\begin{subarray}{c} \rho: \Re(\rho)=\alpha \\ \text{ord}_{s=\rho}L(s,f)<0 \end{subarray}} x^\rho P_\rho(\log x) + O(x^{\alpha-\delta+\epsilon}),\] 
where each $P_\rho(\log x)$ is a polynomial in $\log x$.  Thus, with the notation of Theorem \ref{thm:asymptotic}, we have that
\[\xi(x) = \sum_{\begin{subarray}{c} \rho:\Re(\rho)=\alpha \\ \text{ord}_{s=\rho}L(s,f)<0 \end{subarray}} x^{\Im(\rho)} P_\rho(\log x) + O(x^{-\delta+\epsilon}),\]
and it is easy to see that $\xi(x)$ satisfies the required upper bound.  Thus, we can apply Theorem \ref{thm:asymptotic}, and it turns out that in this application, $\tilde{\xi}(x)$ works out to be 
\[\tilde{\xi}(x) = \sum_{\begin{subarray}{c} \rho:\Re(\rho)=\alpha \\ \text{ord}_{s=\rho}L(s,f)<0 \end{subarray}} x^{\Im(\rho)} Q_\rho(\log x) + O(x^{-\delta^\prime})\]
for some suitably small $\delta^\prime >0$, where $Q_\rho(\log x)$ is a polynomial in $\log x$ of the same degree as $P_\rho(\log x)$.  Thus, the explicit nature of $\tilde{\xi}(t)$ is of use.
\item If $S_f(x)$ exhibits a consistent level of cancellation, then so does $S_g(x)$.  In the above situation, we made use of the explicit nature of $\tilde{\xi}(x)$ to deduce an asymptotic formula for $S_g(x)$, but in many cases, we would not be lucky enough to have an asymptotic formula for $S_f(x)$ with which to begin.  However, in many cases, it may be possible to deduce the weaker statement that $S_f(x)\not\ll_\epsilon x^{\alpha-\epsilon}$ for any $\epsilon>0$.  In this situation, the use of the mean-square lower bound becomes apparent - because $S_f(x)$ exhibits cancellation without satisfying an asymptotic formula, it is likely that $S_f(x)$ could be exceptionally small (perhaps even $0$) for some values of $x$, but it also seems that this occurence should be fairly rare.  We can therefore deduce from Theorem \ref{thm:asymptotic} that if $x^\alpha$ is the right order of magnitude of $S_f(x)$ in this sense, then $x^\alpha$ is also the right order of magnitude for $S_g(x)$.
\end{enumerate}

Somewhat unfortunately, Theorem \ref{thm:beta1} is unable to detect power cancellation below $O(x^{1/2})$, even if $f(n)$ and $g(n)$ are completely multiplicative, so it is unlikely to be of use in answering Question \ref{question:degree1}.  We therefore wish to develop yet another notion of distance that may be more suited to detecting more than squareroot cancellation.  In addition, we would like this new notion of distance to apply usefully to all multiplicative functions, rather than just completely multiplicative functions, which necessitates considering the values at higher prime powers, and we would also like to loosen the restrictions on the size of $f(n)$ and $g(n)$.  To that end, for any two multiplicative functions $f(n)$ and $g(n)$, not necessarily bounded by $1$ in absolute value, define the modified distance
\[\hat{\mathbb{D}}_{\beta,k}(f,g):=\sum_{p}\sum_{j=1}^k \frac{|f(p^j)-g(p^j)|}{p^{j\beta}},\]
and say that $f$ and $g$ are $(\beta,k)$-strongly pretentious if this distance is finite.  Analogous to what we did for Theorem \ref{thm:beta1}, define 
\[\hat{H}_Y(\sigma):=\sum_{p\leq Y} \sum_{k=1}^\infty \frac{|h(p^k)|}{p^{k\sigma}}.\]
\begin{theorem}
\label{thm:beta2}
Let $f(n)$ and $g(n)$ be multiplicative such that $S_f(x)\ll x^\alpha$, and suppose that $f(n)$ and $g(n)$ are $(\beta,k)$-strongly pretentious for some $(\beta,k)$.  Additionally suppose that $f(n),g(n)=o(n^\delta)$ for some $\delta>0$.  If $\sigma>1/(k+1)+\delta$ is such that $\sigma\geq \max(\alpha,\beta)$, then there is a $Y>0$ such if $H_Y(\sigma)$ is convergent, then $S_g(x)\ll x^\sigma$.
\end{theorem}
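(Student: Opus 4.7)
The plan is to mirror the structure of the proof of Theorem \ref{thm:beta1}: define the multiplicative function $h$ by $g = f * h$, decompose it multiplicatively according to the sizes of its prime factors, and evaluate $S_g(x) = \sum_{m \leq x} h(m) S_f(x/m)$ piece by piece. The identity $g(p^j) = \sum_{i=0}^{j} f(p^{j-i}) h(p^i)$ yields the recursion
\[
h(p^j) = \bigl(g(p^j) - f(p^j)\bigr) - \sum_{i=1}^{j-1} f(p^{j-i}) h(p^i),
\]
and the hypothesis $f(n), g(n) = o(n^\delta)$ furnishes a constant $C$ with $|f(p^j)|, |g(p^j)| \leq C p^{j\delta}$ at every prime power. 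A routine induction then delivers two complementary estimates: for $1 \leq j \leq k$,
\[
|h(p^j)| \ll_j \sum_{i=1}^{j} p^{(j-i)\delta}\, \bigl| g(p^i) - f(p^i) \bigr|,
\]
while for all $j \geq 1$ one has the cruder pointwise bound $|h(p^j)| \leq 2(1+C)^j\, p^{j\delta}$.

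Next, choose $Y$ large enough that $(1+C)/Y^{\sigma-\delta} < 1/2$ (possible since $\sigma > 1/(k+1) + \delta \geq \delta$), and factor $h$ multiplicatively as $h = h_Y * h^Y$, where $h_Y$ is the $Y$-smooth part of $h$ and $h^Y$ is the $Y$-rough part. Setting $f_Y := f * h_Y$ gives $g = f_Y * h^Y$. The hypothesis that $\hat{H}_Y(\sigma)$ converges, combined with the finiteness of the prime set $\{p \leq Y\}$, implies via the Euler product that $\sum_a |h_Y(a)|/a^\sigma < \infty$. Using $|S_f(y/a)| \ll (y/a)^\alpha$ and the inequality $a^{\sigma - \alpha} \leq y^{\sigma - \alpha}$ for $a \leq y$ (valid since $\sigma \geq \alpha$),
\[
|S_{f_Y}(y)| \leq \sum_{a \leq y} |h_Y(a)|\, |S_f(y/a)| \ll y^\alpha \cdot y^{\sigma - \alpha} \sum_a \frac{|h_Y(a)|}{a^\sigma} \ll y^\sigma.
\]

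It then suffices to show $\sum_m |h^Y(m)|/m^\sigma < \infty$, for then $|S_g(x)| \leq \sum_m |h^Y(m)|\, |S_{f_Y}(x/m)| \ll x^\sigma$. By Euler product over $p > Y$, this amounts to the finiteness of $\sum_{p > Y} \sum_{j \geq 1} |h(p^j)|/p^{j\sigma}$. The $j \leq k$ contribution is handled by the refined bound on $|h(p^j)|$ and $\sigma \geq \beta$:
\[
\sum_{p > Y} \sum_{j=1}^{k} \frac{|h(p^j)|}{p^{j\sigma}} \ll \sum_{i=1}^{k} \sum_{p} \frac{|g(p^i) - f(p^i)|}{p^{i\beta}} < \infty
\]
by $(\beta,k)$-strong pretentiousness. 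The $j > k$ tail, using the crude bound and the choice of $Y$, gives a geometric sum bounded by $\ll (1+C)^{k+1}/p^{(k+1)(\sigma - \delta)}$ at each prime, and then $\sum_{p > Y} p^{-(k+1)(\sigma - \delta)} < \infty$ precisely because $\sigma > 1/(k+1) + \delta$. The main obstacle is exactly this last step: the values $h(p^j)$ for $j > k$ are uncontrolled by any pretentiousness hypothesis, so one must verify that the exponentially growing constant $(1+C)^j$ is rendered harmless after restricting to $p > Y$ and integrating in $p$, and this is what the critical exponent condition $\sigma > 1/(k+1) + \delta$ encodes.
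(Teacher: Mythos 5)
Your proof is correct and follows the same overall strategy as the paper's: split the Euler product for $\sum_n |h(n)|/n^\sigma$ at $Y$, use $\hat{H}_Y(\sigma)$ for the small primes, and for $p>Y$ split the prime-power index at $k$, controlling $j\leq k$ by strong pretentiousness (since $\sigma\geq\beta$) and $j>k$ by a trivial bound that sums precisely because $\sigma>1/(k+1)+\delta$. The one substantive difference is how you bound $|h(p^j)|$ for $p>Y$: the paper first writes a closed-form solution of the triangular linear system defining $h(p^j)$ in terms of determinants $D_f(n,p)$, yielding the clean factorization
\[
\sum_{n\geq 1}\frac{|h(p^n)|}{p^{n\sigma}} \;\leq\; \left(\sum_{n\geq 0}\frac{|D_f(n,p)|}{p^{n\sigma}}\right)\left(\sum_{m\geq 1}\frac{|f(p^m)-g(p^m)|}{p^{m\sigma}}\right),
\]
and then bounds the two factors separately (Lemmas \ref{lem:detbound} and \ref{lem:largep}), whereas you work directly with the recursion $h(p^j)=(g(p^j)-f(p^j))-\sum_{i<j}f(p^{j-i})h(p^i)$ and induct, obtaining your two complementary bounds on $|h(p^j)|$ in lieu of the factorization. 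The two routes are essentially equivalent --- the determinant formula is just the explicit solution of your recursion --- and your induction trades the elegance of the product bound for slightly more elementary bookkeeping. What you lose by not introducing $D_f(n,p)$ is the observation, used later for Theorem \ref{thm:degreed}, that $D_f(n,p)$ vanishes identically for $n>d$ when $f$ has degree $d$; this structural fact is natural in the determinant formulation but awkward to extract from the induction. Your $g=f_Y*h^Y$ repackaging of the $Y$-split is a clean presentational choice but logically equivalent to the paper's direct split of the Euler product.
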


Let $\mathcal{S}_d$ denote the set of ``degree $d$" multiplicative functions, those $f(n)$ such that $f=f_1*\dots*f_d$, where each $f_i(n)$ is a completely multiplicative function of modulus bounded by $1$.  It is easy to see that if $f\in\mathcal{S}_d$, then $f(n)=o(n^\delta)$ for any $\delta>0$, and so it is possible to apply Theorem \ref{thm:beta2} to functions in $\mathcal{S}_d$.  Since the values $f(p^k)$ for $k\geq 1$ are completely determined by the set $\{f(p^i):1\leq i\leq d\}$ for any $f\in\mathcal{S}_d$, it is reasonable to expect that only the first $d$ prime powers should be relevant.  We are able to show this, and in fact, we are also able to show that $\hat{H}_Y(\sigma)$ is always convergent in the range $\sigma\geq\beta$ once $k$ is at least $d$.  That is, we obtain the following.

\begin{theorem}
\label{thm:degreed}
Let $f,g\in\mathcal{S}_d$ be such that $\hat{\mathbb{D}}_{\beta,d}(f,g)$ is finite for some $\beta>0$, and suppose that $S_f(x)\ll x^\alpha$.  Then $S_g(x)\ll x^{\max(\alpha,\beta)}$.
\end{theorem}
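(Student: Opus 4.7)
The plan is to reduce to Theorem~\ref{thm:beta2}. Because $f = f_1 * \cdots * f_d$ with each $f_i$ completely multiplicative of modulus at most $1$, we have $|f(n)| \leq d_d(n)$, the $d$-fold divisor function, and the same bound holds for $g$; in particular $f(n), g(n) = o(n^\delta)$ for every $\delta > 0$, so the polynomial growth hypothesis is automatic. The remaining tasks are (i) to verify that $\hat{H}_Y(\sigma)$ is convergent for any fixed $Y$ and any $\sigma > 0$, and (ii) to upgrade $(\beta, d)$-strong pretentiousness to $(\beta, K)$-strong pretentiousness for arbitrarily large $K$, so that the constraint $\sigma > 1/(K+1) + \delta$ in Theorem~\ref{thm:beta2} becomes harmless as $K \to \infty$.

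The key input will be the Euler factor structure at each prime. Write
\[
F_p(x) = \sum_{k \geq 0} f(p^k) x^k = \frac{1}{P_p(x)}, \qquad P_p(x) = \prod_{i=1}^d (1 - f_i(p) x) = 1 + a_1 x + \cdots + a_d x^d,
\]
and analogously $G_p(x) = 1/Q_p(x)$ with $Q_p(x) = 1 + b_1 x + \cdots + b_d x^d$; the coefficients $a_j, b_j$ are, up to sign, elementary symmetric polynomials in the Satake parameters and satisfy $|a_j|, |b_j| \leq \binom{d}{j}$. The Euler factor of $h = g * f^{-1}$ is $H_p(x) = P_p(x)/Q_p(x) = 1 + (P_p - Q_p) G_p$, and extracting the $x^k$ coefficient for $k \geq 1$ yields
\[
h(p^k) = \sum_{j=1}^{\min(k,d)} (a_j - b_j) \, g(p^{k-j}).
\]
The identity $F_p(x) P_p(x) = 1$ converts to the triangular recurrence $a_k = -f(p^k) - \sum_{i < k} a_i f(p^{k-i})$ for $1 \leq k \leq d$, expressing each $a_j$ as a polynomial in $f(p), \dots, f(p^j)$ with coefficients bounded purely in terms of $d$; subtracting the analogous formula for $b_j$ gives $|a_j - b_j| \ll_d \sum_{i \leq j} |f(p^i) - g(p^i)|$. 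Combined with the trivial bound $|g(p^i)| \ll_d (i+1)^{d-1}$, this is the central estimate
\[
|h(p^k)| \ll_d (k+1)^{d-1} \sum_{j=1}^d |f(p^j) - g(p^j)|.
\]

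Two consequences then follow. First, since $|f(p^j) - g(p^j)| \ll_d 1$, for any fixed $\sigma > 0$ and $p \geq 2$ we obtain $\sum_{k \geq 1} |h(p^k)|/p^{k\sigma} \ll_{d,\sigma} 1/p^\sigma$, and in particular $\hat{H}_Y(\sigma)$ is finite for every $Y$. Second, reading $g = f * h$ backwards gives $f(p^k) - g(p^k) = -\sum_{i=1}^k f(p^{k-i}) h(p^i)$; inserting the above bound on $h(p^i)$ yields $|f(p^k) - g(p^k)| \ll_d k^{2d-1} \sum_{j \leq d} |f(p^j) - g(p^j)|$ for every $k$. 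Summing the tail $k > d$ against $p^{-k\beta}$ and using $p^{(d+1)\beta} \geq p^{j\beta}$ for $j \leq d$ then shows that $\hat{\mathbb{D}}_{\beta, K}(f, g) < \infty$ for every $K \geq d$. Since $\beta > 0$, I can now select $K \geq d$ with $1/(K+1) < \max(\alpha, \beta)$ and $\delta > 0$ small enough that $\max(\alpha, \beta) > 1/(K+1) + \delta$; applying Theorem~\ref{thm:beta2} with this $K$ and $\sigma = \max(\alpha, \beta)$ yields $S_g(x) \ll x^{\max(\alpha, \beta)}$.

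The hard part will be the Euler factor analysis producing the estimate for $|h(p^k)|$: it requires tracking the precise polynomial relationship between the elementary symmetric functions $a_j, b_j$ and the prime-power values $f(p^j), g(p^j)$, while keeping all implicit constants independent of $p$. Once that estimate is in hand, everything else amounts to elementary rearrangements of Dirichlet series plus a direct invocation of Theorem~\ref{thm:beta2}.
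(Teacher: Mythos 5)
Your proposal is correct, and it takes a genuinely different route from the paper's. The paper proves Theorem~\ref{thm:degreed} directly by establishing convergence of $\sum_n |h(n)|/n^\sigma$ at $\sigma = \max(\alpha,\beta)$ and invoking Proposition~\ref{prop:strongcancel}, splitting the work into Lemma~\ref{lem:degreeddet} (the determinants $D_f(k,p)$ vanish for $k>d$, so $\sum_n |D_f(n,p)|/p^{n\sigma}$ is bounded uniformly in $p$) and Lemma~\ref{lem:degreeddist} ($\hat{\mathbb{D}}_{\beta,d}<\infty$ upgrades to $\hat{\mathbb{D}}_{\beta,\infty}<\infty$, via a self-improving inequality $\sum_{n>d}\cdots \ll_d p^{-\sigma}\sum_{n\geq 1}\cdots$ which for large $p$ absorbs the tail into the head). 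You instead reduce to Theorem~\ref{thm:beta2}, using the degree hypothesis to show $\hat{H}_Y(\sigma)$ is automatically finite and to push the threshold $1/(K+1)+\delta$ below $\max(\alpha,\beta)$ by taking $K$ large. The computational heart is also different: your identity $H_p(x) = 1 + \bigl(P_p(x)-Q_p(x)\bigr)G_p(x)$, with $P_p - Q_p$ of degree at most $d$, gives the bound $|h(p^k)|\ll_d (k+1)^{d-1}\sum_{j\leq d}|f(p^j)-g(p^j)|$ in one stroke, replacing both the determinant recurrence and the symmetric-polynomial manipulations in Lemma~\ref{lem:degreeddist}. You only establish $\hat{\mathbb{D}}_{\beta,K}(f,g)<\infty$ for each finite $K$ rather than $K=\infty$ (your constants grow with $K$), but that is all Theorem~\ref{thm:beta2} needs. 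Net assessment: your route is somewhat more roundabout structurally (it passes through the full machinery of Theorem~\ref{thm:beta2} rather than the simpler Proposition~\ref{prop:strongcancel}), but the Euler-factor identity is arguably cleaner and more conceptual than the determinant bookkeeping.
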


This paper is organized as follows.  In Section \ref{sec:beta}, we study the notion of $\beta$-pretentiousness, and prove Theorems \ref{thm:beta1} and \ref{thm:asymptotic}.  In Section \ref{sec:strong}, we prove Theorems \ref{thm:beta2} and \ref{thm:degreed} about strong pretentiousness.  We also engage in a discussion of the obstruction to extending these results.

\section{$\beta$-pretentiousness: Theorems \ref{thm:beta1} and \ref{thm:asymptotic}}
\label{sec:beta}

Initially, we fix only the following notation for this section: $f(n)$ and $g(n)$ are taken to be arbitrary multiplicative functions, and the function $h(n)$ is defined by the Dirichlet convolution $g(n)=(f*h)(n)$.  We further suppose that $S_f(x)\ll x^\alpha$ for some $\alpha$.  We begin, in Section \ref{subsec:thm:beta1}, by proving Theorem \ref{thm:beta1}, and conclude, in Section \ref{subsec:thm:asymptotic}, by establishing Theorem \ref{thm:asymptotic}.

\subsection{Proof of Theorem \ref{thm:beta1}}
\label{subsec:thm:beta1}

The key result which we use to exhibit cancellation in Theorem \ref{thm:beta1} is the following proposition.

\begin{prop}
\label{prop:cancellation}
Let $f(n)$,$g(n)$, and $h(n)$ be as above.  If the series
\[\sum_{n=1}^\infty \frac{|h(n)|^2}{n^\sigma}\]
is convergent for some $\sigma>0$, then $S_g(x)\ll x^{\max(\alpha,(1+\sigma)/2)}$.  Moreover, if $S_f(x)=o(x^\alpha)$, then $S_g(x)=o(x^{\max(\alpha,(1+\sigma)/2)})$.
\end{prop}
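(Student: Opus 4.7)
The natural starting point is the identity
\[S_g(x) = \sum_{m \le x} h(m)\, S_f(x/m),\]
which follows from $g = f*h$ by interchanging the order of summation. I would then apply Cauchy--Schwarz to the right-hand side, splitting each term as $\bigl(h(m)/m^{\sigma/2}\bigr)\cdot\bigl(m^{\sigma/2} S_f(x/m)\bigr)$, to obtain
\[|S_g(x)|^2 \le \Bigl(\sum_{m \le x} \tfrac{|h(m)|^2}{m^{\sigma}}\Bigr)\Bigl(\sum_{m \le x} |S_f(x/m)|^2\, m^{\sigma}\Bigr).\]
The first factor is bounded by the convergent hypothesis, and the assumption $|S_f(y)| \ll y^{\alpha}$ reduces the second factor to $\ll x^{2\alpha} \sum_{m \le x} m^{\sigma - 2\alpha}$.

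This last sum behaves in two clean regimes: if $\sigma < 2\alpha - 1$ it is $O(1)$, while if $\sigma > 2\alpha - 1$ it is $\ll x^{\sigma - 2\alpha + 1}$. The overall bound is thus $x^{2\alpha}$ in the first case and $x^{\sigma+1}$ in the second, and taking square roots yields $S_g(x) \ll x^{\max(\alpha, (1+\sigma)/2)}$ in both. At the boundary $\sigma = 2\alpha - 1$ the sum contributes an extra $\log x$, but since $(1+\sigma)/2 = \alpha$ there, the resulting $\sqrt{\log x}$ is absorbed after replacing $\sigma$ by a marginally larger value (the hypothesis is preserved since the series only becomes more convergent).

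For the $o$-statement, fix $\epsilon > 0$ and choose $T$ so that $|S_f(y)| \le \epsilon\, y^{\alpha}$ for $y > T$. I would split the convolution sum at $m = x/T$. On the range $m \le x/T$, the Cauchy--Schwarz estimate above applies with a uniform factor of $\epsilon$ baked in, contributing at most $\epsilon \cdot C \cdot x^{\max(\alpha, (1+\sigma)/2)}$. On the complementary range $x/T < m \le x$, the values $|S_f(x/m)|$ are bounded by a constant $C_T$ depending only on $T$, and one more Cauchy--Schwarz gives a bound proportional to
\[\Bigl(\sum_{m > x/T} \tfrac{|h(m)|^2}{m^{\sigma}}\Bigr)^{1/2}\Bigl(\sum_{m \le x} m^{\sigma}\Bigr)^{1/2},\]
in which the first factor is a tail of the convergent series and so tends to $0$ as $x \to \infty$ (with $T$ held fixed), while the second is $O(x^{(1+\sigma)/2})$. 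Sending $x \to \infty$ first and $\epsilon \to 0$ second (or diagonalizing) gives $S_g(x) = o(x^{\max(\alpha, (1+\sigma)/2)})$.

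The argument is fundamentally a single clean application of Cauchy--Schwarz exploiting the convolution structure; the mild technical points are the boundary case $\sigma = 2\alpha - 1$ and the correct order of limits in the $o$-refinement. Neither is a serious obstacle, but both require a line of care. No tools beyond partial summation, Cauchy--Schwarz, and the $L^2$ hypothesis on $h$ are needed.
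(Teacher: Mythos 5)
Your argument is essentially identical to the paper's: both open $S_g(x)=\sum_{m\le x}h(m)S_f(x/m)$ and apply Cauchy--Schwarz with the weights $m^{\pm\sigma/2}$, arriving at the product $x^\alpha\bigl(\sum |h(m)|^2/m^\sigma\bigr)^{1/2}\bigl(\sum_{m\le x} m^{\sigma-2\alpha}\bigr)^{1/2}$; the only (immaterial) difference is that you apply Cauchy--Schwarz before inserting $|S_f(x/m)|\ll (x/m)^\alpha$ whereas the paper does it after, and your splitting at $m=x/T$ for the $o$-statement is the same device the paper gestures at. One small caveat: your parenthetical fix for the boundary $\sigma=2\alpha-1$ does not work as stated, since enlarging $\sigma$ to $\sigma'>2\alpha-1$ yields the strictly worse exponent $(1+\sigma')/2>\alpha$ rather than absorbing the $\sqrt{\log x}$; however the paper's proof carries the same $\sqrt{\log x}$ loss at this boundary and simply leaves it unaddressed, so this is not a divergence from the source.
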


\begin{proof}
From the definition of $h(n)$, we have that
\begin{eqnarray*}
\sum_{n\leq x} g(n)
	&=& \sum_{m\leq x} h(m) \sum_{d\leq x/m} f(d) \\
	&\ll& x^\alpha \sum_{m\leq x} \frac{|h(m)|}{m^\alpha} \\
	&\leq& x^{\alpha} \left(\sum_{m=1}^\infty \frac{|h(m)|^2}{m^\sigma}\right)^{1/2}\left(\sum_{m\leq x}\frac{1}{m^{2\alpha-\sigma}}\right)^{1/2} \\
	&\ll& x^{\max(\alpha,(\sigma+1)/2)}.
\end{eqnarray*}
If we have the stronger assumption that $S_f(x)=o(x^\alpha)$, by splitting the sum over $m$ on the first line according to whether $m$ is large and proceeding in the same way, it is easily seen that $S_g(x)=o\left(x^{\max\left(\alpha,\frac{1+\sigma}{2}\right)}\right)$.  
\end{proof}

In light of Proposition \ref{prop:cancellation}, to prove the first part of Theorem \ref{thm:beta1}, it suffices to establish the following lemma.

\begin{lemma}
\label{lem:h1}
If $f(n)$, $g(n)$, and $h(n)$ are as above, $|f(n)|,|g(n)|\leq 1$ for all $n$, $f(n)$ and $g(n)$ are $\beta$-pretentious for some $\beta>0$, and $\sigma>1/2$ is such that $\sigma\geq\beta$, then the series
\[\sum_{n=1}^\infty \frac{|h(n)|^2 }{n^\sigma}\]
converges if the quantity
\[H(\sigma)=\sum_{p\leq 4^{1/\sigma}} \sum_{k=0}^\infty\frac{|h(p^k)|^2}{p^{k\sigma}}\]
is finite.
\end{lemma}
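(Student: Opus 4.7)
The plan is to exploit the multiplicativity of $|h(n)|^2$ to factor the Dirichlet series as an Euler product and control the local factors prime by prime. Since $h$ is multiplicative, so is $|h|^2$, and
\[\sum_{n=1}^\infty \frac{|h(n)|^2}{n^\sigma} = \prod_p L_p(\sigma), \qquad L_p(\sigma) := \sum_{k=0}^\infty \frac{|h(p^k)|^2}{p^{k\sigma}}.\]
Because every $L_p(\sigma)\ge 1$, this product converges provided each $L_p(\sigma)$ is finite and $\sum_p(L_p(\sigma)-1)<\infty$. For primes $p\le 4^{1/\sigma}$ both requirements are bundled into the hypothesis $H(\sigma)<\infty$, so the task reduces to controlling the primes $p>4^{1/\sigma}$, for which $p^\sigma>4$.

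For such large primes I would split the contribution of $L_p(\sigma)-1$ according to $k=1$ and $k\ge 2$. The $k=1$ piece comes from $h(p)=g(p)-f(p)$: using $|f(p)|,|g(p)|\le 1$, one gets $|h(p)|^2\le 2(1-\Re(f(p)\bar{g}(p)))$, so
\[\sum_{p>4^{1/\sigma}}\frac{|h(p)|^2}{p^\sigma}\le 2\sum_p\frac{1-\Re(f(p)\bar{g}(p))}{p^\beta}=2\,\mathbb{D}_\beta(f,g)^2<\infty,\]
after using $\sigma\ge\beta$ to replace $p^\sigma$ by $p^\beta$ in the denominator. For $k\ge 2$ I would extract the local recursion coming from $g=f*h$ at a prime power,
\[h(p^k)=g(p^k)-f(p^k)-\sum_{j=1}^{k-1}f(p^j)h(p^{k-j}),\]
from which induction together with $|f|,|g|\le 1$ yields the crude bound $|h(p^k)|\le 2^k$. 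Plugging this in turns the tail into a geometric series
\[\sum_{k=2}^\infty\frac{|h(p^k)|^2}{p^{k\sigma}}\le \sum_{k=2}^\infty\left(\frac{4}{p^\sigma}\right)^k= O\!\left(p^{-2\sigma}\right),\]
since $p^\sigma>4$ precisely when $p>4^{1/\sigma}$, and then summing over $p$ converges because $\sigma>1/2$ gives $2\sigma>1$.

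Putting these two pieces together shows simultaneously that each large-prime factor $L_p(\sigma)$ is finite and that $\sum_{p>4^{1/\sigma}}(L_p(\sigma)-1)<\infty$, so the Euler product, and hence the original series, converges. I do not anticipate a serious obstacle: the construction is essentially dictated by the statement, with the cutoff $4^{1/\sigma}$ chosen exactly so that the recursion constant $2$ is dominated by $p^\sigma$. The only mild subtlety is the clean separation of hypotheses: $\sigma\ge\beta$ is used only to absorb the $k=1$ terms into $\mathbb{D}_\beta(f,g)^2$, while $\sigma>1/2$ is used only to control the $k\ge 2$ tail through the trivial Euler-factor estimate, and the small primes — where neither bound need hold — are cordoned off into $H(\sigma)$.
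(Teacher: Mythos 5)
Your proof is correct and follows essentially the same route as the paper: factor as an Euler product, bound $|h(p^k)|\le 2^k$ via the convolution recursion, absorb the $k=1$ term into $\mathbb{D}_\beta(f,g)^2$ using $\sigma\ge\beta$, and control the $k\ge 2$ tail by a geometric series valid once $p^\sigma>4$ (with $\sigma>1/2$ ensuring summability over $p$), cordoning off the small primes into $H(\sigma)$. If anything your write-up is a bit more careful than the paper's, which informally restricts to $p>3$ where the actual needed condition is $p>4^{1/\sigma}$.
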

\begin{proof}
Since $|g(n)| \leq 1$ and $|f(n)| \leq 1$, we have that
\[
|h(p^k)| \leq 2^k
\]
for all $p$ and all $k$. Therefore for $p>3$, one has
\[
\sum_{k=0}^\infty\frac{|h(p^k)|^2}{p^{k\sigma}} \leq 1+ \frac{1-\mathrm{Re}(f(p)\overline{g(p)})}{p^\sigma}+ \frac{16}{p^{2\sigma}}(1-4/p^\sigma)^{-1}.
\]
Thus, our assumptions that $\sigma\geq\beta$ and
\[\mathbb{D}_\beta(f,g)=\sum_{p}\frac{1-\Re(f(p)\overline{g(p)})}{p^\beta}\]
is finite, together with the assumptions of the lemma, guarantee that the series
\[\sum_{n=1}^\infty \frac{|h(n)|^2}{n^\sigma} = \prod_p \left(\sum_{k=0}^\infty\frac{|h(p^k)|^2}{p^{k\sigma}}\right)\]
is absolutely convergent.
\end{proof}

To establish the cancellation for completely multiplicative functions claimed in the second part of Theorem \ref{thm:beta1}, we have the following lemma.

\begin{lemma}
\label{lem:h2}
If $f(n)$,$g(n)$, and $h(n)$ are as in Lemma \ref{lem:h1} and $f(n)$ and $g(n)$ are completely multiplicative, then the series \[\sum_{n=1}^\infty \frac{|h(n)|^2}{n^\beta}\] is convergent.
\end{lemma}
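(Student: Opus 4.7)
The plan is to use the complete multiplicativity of $f$ and $g$ to obtain a closed-form expression for $h$ at prime powers, then factor the sum of interest as an Euler product and bound each local factor using the hypotheses.

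First I would compute $h(p^k)$ exactly. Since $f$ and $g$ are completely multiplicative, their Dirichlet series are Euler products with linear local factors, and the defining relation $g = f*h$ translates on the prime $p$ into
\[\sum_{k=0}^\infty h(p^k) x^k \;=\; \frac{1 - f(p)x}{1 - g(p)x}.\]
Expanding the right-hand side as a geometric series gives $h(1)=1$ and $h(p^k) = g(p)^{k-1}(g(p)-f(p))$ for $k\geq 1$. In particular $|h(p^k)|\leq |g(p)-f(p)|$ for all $k\geq 1$, since $|g(p)|\leq 1$.

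Next I would exploit the fact that $|h|^2$ is multiplicative (because $h$ is), so the series in question factors as an Euler product, and the local factor at $p$ equals
\[1 + \frac{|g(p)-f(p)|^2/p^\beta}{1 - |g(p)|^2/p^\beta} \;\leq\; 1 + \frac{|g(p)-f(p)|^2}{p^\beta - 1}.\]
Each local factor is individually finite since $\beta>0$ and $|g(p)|\leq 1$, so only the convergence of the product (equivalently, of $\sum_p |g(p)-f(p)|^2/p^\beta$ away from the finitely many primes with $p^\beta$ close to $1$) is at issue.

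The crucial linking step is the elementary inequality
\[|g(p)-f(p)|^2 \;=\; |f(p)|^2 + |g(p)|^2 - 2\,\Re(f(p)\overline{g(p)}) \;\leq\; 2\bigl(1 - \Re(f(p)\overline{g(p)})\bigr),\]
which follows from $|f(p)|,|g(p)|\leq 1$. Summing over $p$ against $p^{-\beta}$ and invoking $\mathbb{D}_\beta(f,g)<\infty$ bounds the tail $\sum_p |g(p)-f(p)|^2/p^\beta$ by $2\mathbb{D}_\beta(f,g)^2 < \infty$, and applying $\log(1+t)\leq t$ to the logarithm of the Euler product then gives convergence. The one thing to be careful about is that a finite number of small primes might have $p^\beta \leq 2$, making the denominator $p^\beta - 1$ small or negative; since there are only finitely many such primes and each local factor is individually finite, they contribute a harmless bounded multiplicative constant. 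This is the only point that requires any care, and I would handle it by splitting the product into a finite prefix and a tail over primes $p > 2^{1/\beta}$.
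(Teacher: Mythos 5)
Your proposal is correct and follows essentially the same route as the paper: derive $h(p^k)=g(p)^{k-1}(g(p)-f(p))$, bound $|h(p^k)|\leq|g(p)-f(p)|$, factor the sum as an Euler product, sum the geometric tail at each prime, and close with $|g(p)-f(p)|^2\leq 2(1-\Re(f(p)\overline{g(p)}))$ and the finiteness of $\mathbb{D}_\beta(f,g)$. One small correction: the denominator $p^\beta-1$ can never be negative, since $p\geq 2$ and $\beta>0$ give $p^\beta>1$; the paper simply bounds $(1-p^{-\beta})^{-1}\leq(1-2^{-\beta})^{-1}$ uniformly over all primes, so no splitting off a finite prefix is actually needed. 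Your safety-net handling is harmless but unnecessary.
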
  
\begin{proof}
Since $h(p^k)=g(p^{k-1})(g(p)-f(p))$ for all primes $p$ and all $k \geq 1$, we have that
\[
|h(p^k)|^2 \leq |g(p)-f(p)|^2 \leq 2(1-\Re(f(p)\bar{g}(p))).
\]
Therefore
\begin{align*}
\sum_{n=1}^\infty \frac{|h(n)|^2}{n^\beta} &=\prod_p \left(1+\sum_{k=1}^\infty \frac{|h(p^k)|^2}{p^{k\beta}}\right)\\
&\leq \prod_p \left(1+\frac{2(1-\Re(f(p)\bar{g}(p)))}{p^\beta}(1-p^{-\beta})^{-1}\right)\\
&\leq \exp\left(\sum_p \frac{2(1-\Re(f(p)\bar{g}(p)))}{p^\beta}(1-2^{-\beta})^{-1}\right)\\
&= \exp\left(2(1-2^{-\beta})^{-1} \mathbb{D}_\beta(f,g) \right) <\infty.
\end{align*}
\end{proof}

It is worth noting at this point that there is another natural approach to proving the theorem, albeit one that is not entirely within the bounds of the pretentious philosophy.  From the relation $g(n)=(f*h)(n)$, we have the Dirichlet series identity
\[L(s,g)=L(s,f)L(s,h).\]
The assumption that $S_f(x)\ll x^\alpha$ translates to $L(s,f)$ being analytic in the right half-plane $\Re(s)>\alpha$ and the assumption that $g(n)$ is $\beta$-pretentious to $f(n)$, in light of Lemma \ref{lem:h1} and the Cauchy-Schwarz inequality, implies that $L(s,h)$ is analytic in the region $\Re(s)>\max\left(3/4,\frac{1+\beta}{2}\right)$.  Standard arguments (e.g. Perron's formula) then imply the desired bound for $S_g(x)$.  Our proof of optimality will proceed along similar lines.  While it is somewhat unfortunate that we have to use this mildly non-pretentious argument, it is not entirely clear how to avoid its use.

\begin{lemma}
\label{lem:optimality}
Given any $\beta>0$ and a completely multiplicative function $f(n)$ of modulus bounded by $1$ such that $f(n)$ is $1$-pretentious to itself, there is a completely multiplicative function $g(n)$ that is $\beta$-pretentious to $f(n)$, and which does not satisfy $S_g(x)\ll x^{(1+\beta)/2-\epsilon}$ for any $\epsilon>0$.
\end{lemma}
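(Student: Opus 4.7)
Following the Dirichlet series approach sketched by the authors right after Lemma~\ref{lem:h2}, I would begin by recalling that $S_g(x) \ll x^c$ implies, by partial summation, that $L(s, g) = s\int_1^\infty S_g(x) x^{-s-1}\,dx$ is holomorphic of polynomial growth on $\Re(s) > c$. It thus suffices to construct a completely multiplicative $g$ of modulus bounded by $1$ with $\mathbb{D}_\beta(f, g) < \infty$ such that $L(s, g)$ fails to admit such a continuation to any half-plane $\Re(s) > (1+\beta)/2 - \epsilon$.

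Setting $\gamma := (1+\beta)/2$, I would build $g$ by perturbing $f$ on a carefully chosen set $P$ of primes, writing $g(p) = f(p) + \eta_p$ for $p \in P$ (truncated so $g(p)$ remains in the closed unit disc) and $g(p) = f(p)$ off $P$, extended by complete multiplicativity. Since both $f$ and $g$ are completely multiplicative,
\[
\log\frac{L(s, g)}{L(s, f)} = \sum_{p \in P}\frac{\eta_p}{p^s} + O_\sigma(1)
\]
for $\Re(s) > \sigma > 1/2$, so the boundary behaviour at $\Re(s) = \gamma$ is governed by the linear prime sum $\Phi(s) := \sum_{p \in P}\eta_p/p^s$. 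The inequality $|g(p) - f(p)|^2 \leq 2(1 - \Re(f(p)\bar g(p)))$ combined with $\beta$-pretentiousness forces $\sum_{p \in P}|\eta_p|^2/p^\beta < \infty$, and via Cauchy--Schwarz this gives
\[
\sum_{p \in P}\frac{|\eta_p|}{p^\gamma} \leq \Bigl(\sum_{p \in P}\frac{|\eta_p|^2}{p^\beta}\Bigr)^{1/2}\Bigl(\sum_{p \in P}\frac{1}{p}\Bigr)^{1/2},
\]
an inequality sharp at the exponent $\gamma$ precisely because $\sum 1/p$ diverges logarithmically.

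I would therefore take $\eta_p$ aligned in a coherent direction of borderline magnitude, for instance $\eta_p \sim c\,\overline{f(p)}/\bigl(p^{(1-\beta)/2}(\log p)^{1/2}\bigr)$ on primes where $|f(p)|$ is close to $1$, calibrated so that $\Phi(s)$ has abscissa of convergence exactly $\gamma$ and exhibits a non-analytic boundary at $s = \gamma$, while the quadratic sum $\sum_{p \in P}|\eta_p|^2/p^\beta \sim \sum 1/(p\log p)$ remains finite. This produces $L(s, g) = L(s, f)\cdot H(s)$ with $H$ singular on $\Re(s) = \gamma$, ruling out a polynomial-growth continuation past this line and hence the bound $S_g(x) \ll x^{\gamma - \epsilon}$.

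The principal obstacle is this precise calibration: the Cauchy--Schwarz bound on the critical line is sharp, so the pretentiousness and singularity requirements sit on exactly the same logarithmic scale, and the alignment and log-weight of $\eta_p$ must be tuned to put one sum just barely convergent and the other just barely divergent. Ancillary subtleties --- controlling the higher-order terms in the logarithmic expansion (absolutely convergent for $\Re(s) > 1/2$, hence harmless), implementing the truncation enforcing $|g(p)| \leq 1$ without disturbing the boundary behaviour of $\Phi$, and treating primes where $|f(p)|$ is far from $1$ using the hypothesis that $f$ is $1$-pretentious to itself --- are routine but demand care.
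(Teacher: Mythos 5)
Your high-level idea --- perturb $f$ at the primes so that $L(s,g)=L(s,f)L(s,h)$ acquires a singularity on $\Re(s)=(1+\beta)/2$ and conclude by Perron/partial summation --- matches the paper's, but the specific perturbation you propose does not quite close. The choice $\eta_p \sim c\,\overline{f(p)}/(p^{(1-\beta)/2}(\log p)^{1/2})$ gives summands $\eta_p p^{-s}$ whose phases $-\arg f(p)$ vary arbitrarily with $p$, so the prime sum $\Phi(s)$ is not phase-coherent and need not actually be singular at $s=(1+\beta)/2$: the terms can cancel. Also, $\overline{f(p)}$ is not tangential to the circle $|z|=|f(p)|$ at $f(p)$, so $1-\Re(f(p)\overline{g(p)})$ picks up the uncontrolled-sign \emph{linear} term $-\Re(f(p)\overline{\eta_p})$; the inequality $|f(p)-g(p)|^2\leq 2(1-\Re(f(p)\overline{g(p)}))$ that you invoke is only the necessary direction and does not yield $\mathbb{D}_\beta(f,g)<\infty$ from $\sum|\eta_p|^2/p^\beta<\infty$. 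And since $|f(p)|$ is near $1$ for ``most'' primes (which is roughly what $\mathbb{D}_1(f,f)<\infty$ says), the truncation enforcing $|g(p)|\leq 1$ would annihilate the perturbation at exactly those primes.

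The paper avoids all of these simultaneously by making the perturbation a small \emph{rotation}, $g(p)=e\bigl(\omega_p/(p^{(1-\beta)/2}\log\log p)\bigr)f(p)$ with $\omega_p=\pm 1$: rotation preserves modulus (no truncation), it is tangential to first order so $1-\Re(f(p)\overline{g(p)})=(1-|f(p)|^2)+|f(p)|^2(1-\cos\theta_p)$ is genuinely quadratic in the perturbation, and the signs $\omega_p$ are chosen as $-\text{sign}(\Im f(p))$ or $\text{sign}(\Re f(p))$ so that either the real or the imaginary part of $\sum_p i\omega_p f(p)/(p\log\log p)$ is forced to diverge, using $|\Re f(p)|+|\Im f(p)|\geq|f(p)|^2\geq 1-(1-|f(p)|^2)$ together with $\mathbb{D}_1(f,f)<\infty$. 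The weight matters too: with your $(\log p)^{1/2}$, $\sum|\eta_p|/p^{(1+\beta)/2}\ll\sum 1/(p(\log p)^{1/2})$ actually converges, so any singularity is of a delicate bounded sort, and the phase-coherence argument would require $\sum(1-|f(p)|^2)(\log p)^{1/2}/p<\infty$, which does not follow from $1$-pretentiousness; the paper's $\log\log p$ sidesteps both issues, since $\sum 1/(p\log\log p)$ diverges while $\sum(1-|f(p)|^2)/(p\log\log p)\leq\mathbb{D}_1(f,f)^2<\infty$. You would also need to address the possibility that $L((1+\beta)/2,f)=0$, which the paper does by shifting the construction to force a singularity at $(1+\beta)/2+it$ for a $t$ where $L$ does not vanish.
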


\begin{proof}
First, we may assume that $L(s,f)$ is analytic in the region $\Re(s)>(1+\beta)/2-\delta$ for some $\delta>0$, otherwise we could simply take $g(n)$ to be $f(n)$.  Let 
\[
g(p):=e\left( \frac{\omega_p}{p^{\frac{1-\beta}{2}}\log\log p}\right) f(p),
\]
where $\omega_p=\pm 1$ is a system of signs to be specified later and, as is standard, $e(x):=e^{2\pi i x}$.  It is easy to verify that $g(n)$ is $\beta$-pretentious to $f(n)$.  We now compute the Euler product for $L(s,h)$ using the Taylor expansion of $e(x)$, getting that
\begin{eqnarray*}
L(s,h)
	&=& \prod_p \left( 1 + \frac{g(p)-f(p)}{p^s} + O\left(p^{-2s}\right)\right) \\
	&=& \prod_p \left(1 + \frac{2\pi i \omega_p f(p)}{p^{s+\frac{1-\beta}{2}}\log\log p}+O\left(p^{-2s}+p^{-s-1+\beta}\right)\right).
\end{eqnarray*}
The convergence of $L(s,h)$ at $s=\frac{1+\beta}{2}$ is thus dictated by the behavior of the series
\[P_f(\tau):=\sum_{p} \frac{i\omega_p f(p)}{p^\tau\log\log p}\]
as $\tau$ tends to 1 from the right.  In particular, $L(s,h)$ will have a singularity at $s=\frac{1+\beta}{2}$ if we can force either the real part of $P_f(\tau)$ to tend to infinity, accounting for a (possibly fractional order) pole, or, failing that, to have the real part of $P_f(\tau)$ converge but the imaginary part diverge to infinity, accounting for an essential singularity.  Obviously, we now choose $\omega_p$ to ensure one of these situations.  If the series
\[\sum_p \frac{\Im(f(p))}{p\log\log p}\]
is not absolutely convergent, we choose $\omega_p=-\text{sign}(\Im(f(p)))$, forcing $\Re(P_f(\tau))$ to diverge to infinity.  If the series is absolutely convergent, we choose $\omega_p=\text{sign}(\Re(f(p)))$, observing that
\begin{eqnarray*}
\sum_p \frac{|\Re(f(p))|}{p^\tau\log\log p} + \sum_p \frac{|\Im(f(p))|}{p^\tau\log\log p} 
	&\geq& \sum_p \frac{\Re(f(p))^2+\Im(f(p))^2}{p^\tau\log\log p} \\
	&=& \sum_p \frac{|f(p)|^2}{p^\tau\log\log p} \\
	&\geq& \sum_p \frac{1}{p^\tau\log\log p} - \mathbb{D}_1(f,f),
\end{eqnarray*}
which tends to infinity as $\tau\to 1^+$.  We thus have that
\[\Im\left(\sum_p \frac{i\omega_p f(p)}{p\log\log p}\right) = \sum_p \frac{|\Re(f(p))|}{p\log\log p} = \infty,\]
from which we conclude that $\Im(P_f(x))$ tends to infinity.  We have thus constructed $g(n)$ so that $L(s,h)$ has a singularity at $s=\frac{1+\beta}{2}$, so provided that $L\left(\frac{1+\beta}{2},f\right)\neq 0$, we obtain the result.  If $L\left(\frac{1+\beta}{2},f\right)=0$, there is a $t\in\mathbb{R}$ such that $L\left(\frac{1+\beta}{2}+it,f\right)\neq 0$.  We make the obvious modifications to the construction above to force $L(s,h)$ to have a singularity at $s=\frac{1+\beta}{2}+it$.
\end{proof}

\subsection{Proof of Theorem \ref{thm:asymptotic}}
\label{subsec:thm:asymptotic}

We now suppose we are in the situation of Theorem \ref{thm:asymptotic}.  That is, we assume that $f(n)$ is multiplicative, of modulus bounded by $1$, and is such that
\[S_f(x)=x^\alpha \xi(x)\]
for some function $\xi(x)$ satisfying $\xi(t)\ll_\epsilon t^\epsilon$ for all $\epsilon>0$, and we also assume that $\beta<2\alpha-1$.  In addition, if $f(n)$ is not completely multiplicative, we assume that $\alpha>3/4$ and that the series $H(2\sigma-1)$ is convergent.  To establish a similar formula for $S_g(x)$, we note that
\begin{eqnarray*}
\sum_{n\leq x} g(n) 
	&=& \sum_{m\leq x} h(m) \sum_{d\leq x/m} f(d) \\
	&=& x^\alpha \sum_{m\leq x} \frac{h(m)}{m^\alpha} \xi(x/m),
\end{eqnarray*}
and so we naturally define $\tilde{\xi}(x)$ to be the convolution
\[\tilde{\xi}(x):=\sum_{m\leq x} \frac{h(m)}{m^\alpha} \xi(x/m).\]
To see that $\tilde{\xi}(x)\ll x^\epsilon$, we merely note that
\[|\tilde{\xi}(x)| \leq \sum_{m\leq x} \frac{|h(m)|}{m^\alpha} |\xi(x/m)| \ll_\epsilon x^\epsilon \sum_{m\leq x} \frac{|h(m)|}{m^{\alpha+\epsilon}}.\]
Our assumptions guarantee that the series on the right is convergent, whence the claimed bound.  Now, suppose that 
\[\int_1^T |\xi(t)|^2 dt \gg_\epsilon T^{1-\epsilon}.\]
M\"{o}bius inversion gives that
\[\xi(x)=\sum_{m\leq x} \frac{\tilde{h}(m)}{m^\alpha} \tilde{\xi}(x/m),\]
where $\tilde{h}(n)$ is the Dirichlet inverse of $h(n)$ (i.e., $(h*\tilde{h})(1)=1$ and $(h*\tilde{h})(n)=0$ for $n>1$).  Using this and the Cauchy-Schwarz inequality in the above, we obtain that
\begin{eqnarray*}
T^{1-\epsilon}
	&\ll_\epsilon & \int_1^T \left(\sum_{m\leq t} \frac{|\tilde{h}(m)|^2}{m^\beta}\right)\left(\sum_{m\leq t}\frac{|\tilde{\xi}(t/m)|^2}{m^{2\alpha-\beta}}\right)dt \\
	&\leq& \sum_{m=1}^\infty \frac{|\tilde{h}(m)|^2}{m^\beta} \int_1^T \sum_{m\leq t}\frac{|\tilde{\xi}(t/m)|^2}{m^{2\alpha-\beta}} dt \\
	&=& \sum_{m=1}^\infty \frac{|\tilde{h}(m)|^2}{m^\beta} \sum_{m\leq T} \frac{1}{m^{2\alpha-\beta-1}}\int_1^{T/m} |\tilde{\xi}(t)|^2 dt \\
	&\ll& T^{2-2\alpha+\beta} \int_1^T |\tilde{\xi}(t)|^2\frac{dt}{t^{2-2\alpha+\beta}},
\end{eqnarray*}
and we note that the infinite series is convergent for the same reason as the series for $|h(n)|^2$, so we have absorbed it into the implied constant.  Now, let 
\[I:=\int_1^T |\tilde{\xi}(t)|^2 dt,\]
and apply H\"{o}lder's inequality to get that
\begin{eqnarray*}
\int_1^T |\tilde{\xi}(t)|^2\frac{dt}{t^{2-2\alpha+\beta}}
	&\leq& I^{\frac{2\alpha-\beta-1}{2}} \left( \int_1^T \frac{|\tilde{\xi}(t)|^2}{t^{\frac{2(2-2\alpha+\beta)}{3-2\alpha+\beta}}} dt\right)^{\frac{3-2\alpha+\beta}{2}} \\
	&\ll_\epsilon& I^{\frac{2\alpha-\beta-1}{2}} \left( \int_1^T t^{\frac{-2(2-2\alpha+\beta)}{3-2\alpha+\beta}+\epsilon} dt\right)^{\frac{3-2\alpha+\beta}{2}} \\
	&\ll& I^{\frac{2\alpha-\beta-1}{2}} T^{\frac{2\alpha-\beta-1}{2}+\epsilon}.
\end{eqnarray*}
Using this in the above, we obtain that
\[I^{\frac{2\alpha-\beta-1}{2}} T^{\frac{3-2\alpha+\beta}{2}+\epsilon} \gg_\epsilon T^{1-\epsilon},\]
and so we have that
\[I^{\frac{2\alpha-\beta-1}{2}} \gg_\epsilon T^{\frac{2\alpha-\beta-1}{2}-\epsilon},\]
and the result follows, concluding the proof of Theorem \ref{thm:asymptotic}.

Since the Dirichlet series $L(s,h)$ for $\Re(s)\geq \alpha$ plays a critical role in the definition of $\tilde{\xi}(x)$, it is useful to know whether it is 0.  In particular, in applying Theorem \ref{thm:asymptotic} in the case when $S_f(x)$ satisfies an asymptotic formula, we might potentially lose a term in our formula if $L(\rho,h)=0$ for some pole $\rho$ of $L(s,f)$.  However, we have the following simple observation.

\begin{lemma}
\label{lem:nonvanishing}
If $f(n)$ and $g(n)$ are completely multiplicative and as above, then the Dirichlet series $L(s,h)$ associated to $h(n)$ is non-zero in the region $\Re(s)>(1+\beta)/2$.
\end{lemma}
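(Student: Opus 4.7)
The plan is to exhibit $L(s,h)$ as an absolutely convergent Euler product of non-vanishing factors in the half-plane $\Re(s) > (1+\beta)/2$, and then invoke the standard fact that such a product cannot vanish.

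First, I would use complete multiplicativity to compute the Euler factor of $L(s,h)$ at each prime. Since $h = g * f^{-1}$ with $f^{-1}(n) = \mu(n) f(n)$, one gets $h(p^k) = g(p)^{k-1}(g(p) - f(p))$ for $k \geq 1$. Summing the geometric series yields
\[
\sum_{k=0}^{\infty} \frac{h(p^k)}{p^{ks}} = 1 + \frac{(g(p)-f(p)) p^{-s}}{1 - g(p) p^{-s}} = \frac{1 - f(p) p^{-s}}{1 - g(p) p^{-s}}.
\]
Because $|f(p)|,|g(p)|\leq 1$, both numerator and denominator are non-zero whenever $\Re(s) > 0$, so each Euler factor is a finite, non-zero complex number in our region.

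Next, I would show that the Euler product converges absolutely for $\Re(s) > (1+\beta)/2$, which amounts to bounding $\sum_p |E_p(s) - 1|$, where $E_p(s)$ is the Euler factor above. A direct estimate gives
\[
|E_p(s) - 1| \ll \frac{|g(p) - f(p)|}{p^{\Re(s)}}
\]
uniformly for $\Re(s)$ bounded away from $0$. By Cauchy--Schwarz,
\[
\sum_p \frac{|g(p) - f(p)|}{p^{\Re(s)}} \leq \left(\sum_p \frac{|g(p)-f(p)|^2}{p^{\beta}}\right)^{1/2}\left(\sum_p \frac{1}{p^{2\Re(s)-\beta}}\right)^{1/2},
\]
and the first factor is $\ll \mathbb{D}_\beta(f,g) < \infty$ (using $|g(p)-f(p)|^2 \leq 2(1-\Re(f(p)\bar g(p)))$), while the second is finite precisely when $2\Re(s) - \beta > 1$.

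Finally, I would conclude: once $\sum_p |E_p(s) - 1|$ converges and no $E_p(s)$ vanishes, the standard Euler-product lemma gives that $\prod_p E_p(s)$ converges to a non-zero limit. There is essentially no obstacle here beyond the Cauchy--Schwarz step, which mirrors the argument already used in Lemma \ref{lem:h2}; the only mildly delicate point is to remember that non-vanishing of an infinite product follows from non-vanishing of its factors together with \emph{absolute} convergence (equivalently, convergence of $\sum_p \log E_p(s)$ after choosing a branch), which is exactly what the above estimate supplies.
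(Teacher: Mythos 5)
Your argument is correct, but it takes a genuinely different route from the paper. You compute the Euler factor explicitly, getting $E_p(s) = (1 - f(p)p^{-s})/(1 - g(p)p^{-s})$, observe that each factor is finite and non-zero for $\Re(s)>0$, and then show $\sum_p |E_p(s)-1|$ converges in $\Re(s)>(1+\beta)/2$ via the bound $|E_p(s)-1|\ll |g(p)-f(p)|p^{-\Re(s)}$ together with Cauchy--Schwarz and $|g(p)-f(p)|^2\leq 2(1-\Re(f(p)\bar g(p)))$; non-vanishing then follows from the standard fact about absolutely convergent Euler products with non-zero factors. The paper instead exploits the symmetry of the hypotheses in $f$ and $g$: defining $\tilde h$ by $f = g*\tilde h$, Lemma \ref{lem:h2} shows both $L(s,h)$ and $L(s,\tilde h)$ are absolutely convergent (hence finite) Dirichlet series in the region, and since $L(s,\tilde h) = 1/L(s,h)$, the quotient cannot vanish. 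Your approach is more hands-on and self-contained (it never needs to mention $\tilde h$), while the paper's is shorter because it reuses Lemma \ref{lem:h2} verbatim for $\tilde h$ and lets the reciprocal identity do the work. One small point worth making explicit in your write-up: you should note that the absolute convergence you establish also justifies identifying the Dirichlet series $L(s,h)=\sum_n h(n)n^{-s}$ with the Euler product $\prod_p E_p(s)$, since $|h(p^k)| = |g(p)|^{k-1}|g(p)-f(p)|\leq |g(p)-f(p)|$ gives $\sum_p\sum_{k\geq 1}|h(p^k)|p^{-k\sigma}<\infty$ by the same Cauchy--Schwarz estimate.
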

\begin{proof}
Since $h(n)$ is defined by the relation $g=f*h$, we have the Dirichlet series formula
\[L(s,h)=\frac{L(s,g)}{L(s,f)}.\]
By Lemma \ref{lem:h2}, this is absolutely convergent in the region $\Re(s)>(1+\beta)/2$.  If we define $\tilde{h}(n)$ by $f=g*\tilde{h}$, the same argument applies to $L(s,\tilde{h})$.  Since we also have that
\[L(s,\tilde{h})=\frac{1}{L(s,h)},\]
this immediately yields the result.
\end{proof}

Of course, if $f(n)$ and $g(n)$ are not completely multiplicative, the analogue of Lemma \ref{lem:nonvanishing} can still be obtained with Lemma \ref{lem:h1} replacing Lemma \ref{lem:h2}.

\section{Strong pretentiousness: Theorems \ref{thm:beta2} and \ref{thm:degreed}}
\label{sec:strong}

As in Section \ref{sec:beta}, we consider separately the proofs of Theorems \ref{thm:beta2} and \ref{thm:degreed}.  In Section \ref{subsec:squarefree}, we consider another interesting application of Theorem \ref{thm:beta2}, which illustrates some of the obstructions to extending the result.

\subsection{Proof of Theorem \ref{thm:beta2}}
\label{subsec:thm:beta2}

We now let $f(n)$, $g(n)$, and $h(n)$ be as in the hypotheses of Theorem \ref{thm:beta2}.  Thus, $f(n)$ and $g(n)$ are multiplicative and satisfy $f(n),g(n)=o(n^\delta)$ for some $\delta>0$, and $h(n)$ is defined by $g(n)=(f*h)(n)$.  We begin with the following analogue of Proposition \ref{prop:cancellation}, which we present without proof.

\begin{prop}
\label{prop:strongcancel}
Let $f(n)$, $g(n)$, and $h(n)$ be as above.  Suppose that $S_f(x)\ll x^\alpha$ for some $\alpha>0$.  If the series
\[\sum_{n=1}^\infty \frac{|h(n)|}{n^\sigma}\]
is convergent for some $\sigma>0$, then $S_g(x)\ll x^{\max(\alpha,\sigma)}$.
\end{prop}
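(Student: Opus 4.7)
The plan is to mimic the argument used in the proof of Proposition \ref{prop:cancellation}, but in a simpler fashion, using the convergence of the $L^1$-type series $\sum |h(n)|/n^\sigma$ directly rather than invoking Cauchy-Schwarz with an $L^2$-type hypothesis.

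First I would start from the convolution identity $g=f*h$ to write
\[\sum_{n\leq x} g(n) = \sum_{m\leq x} h(m) S_f(x/m),\]
and then insert the assumption $S_f(y)\ll y^\alpha$ to obtain
\[S_g(x) \ll x^\alpha \sum_{m\leq x} \frac{|h(m)|}{m^\alpha}.\]
At this point the two hypotheses we have are just that this last sum is a truncation of $\sum_m |h(m)|/m^\sigma$ up to the substitution $\alpha\leftrightarrow\sigma$, and that the latter converges.

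The second step is then to split into two trivial cases depending on the relative size of $\alpha$ and $\sigma$. If $\alpha\geq\sigma$, then termwise $|h(m)|/m^\alpha\leq |h(m)|/m^\sigma$, so the full series $\sum_{m=1}^\infty |h(m)|/m^\alpha$ converges and we immediately conclude $S_g(x)\ll x^\alpha$. If on the other hand $\alpha<\sigma$, for $m\leq x$ I would use the elementary bound $m^{\sigma-\alpha}\leq x^{\sigma-\alpha}$ to write
\[x^\alpha \sum_{m\leq x} \frac{|h(m)|}{m^\alpha} = x^\alpha \sum_{m\leq x} \frac{|h(m)|}{m^\sigma}\cdot m^{\sigma-\alpha} \leq x^\sigma \sum_{m=1}^\infty \frac{|h(m)|}{m^\sigma} \ll x^\sigma,\]
yielding $S_g(x)\ll x^\sigma$. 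Combining the two cases gives the desired bound $S_g(x)\ll x^{\max(\alpha,\sigma)}$.

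There is really no significant obstacle here; the argument is essentially the $L^1$ analogue of the Cauchy--Schwarz computation in Proposition \ref{prop:cancellation}, and the stronger hypothesis (absolute convergence of $\sum |h(n)|/n^\sigma$ rather than $\sum |h(n)|^2/n^\sigma$) is exactly what allows us to dispense with the Cauchy--Schwarz step and the resulting $(1+\sigma)/2$ exponent. This is why the authors feel comfortable omitting the proof.
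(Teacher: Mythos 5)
Your argument is correct, and it is precisely the natural $L^1$ analogue of the Cauchy--Schwarz computation in Proposition \ref{prop:cancellation}. The paper explicitly presents Proposition \ref{prop:strongcancel} without proof, so there is no authors' proof to compare against, but the reduction to the bound $S_g(x)\ll x^\alpha \sum_{m\leq x}|h(m)|/m^\alpha$ followed by the two-case comparison with the convergent series $\sum_m |h(m)|/m^\sigma$ is exactly what the omission invites the reader to supply, and your write-up is complete and clean.

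One cosmetic point: the case split is unnecessary, since for all $m\leq x$ one has $(x/m)^\alpha \le (x/m)^{\max(\alpha,\sigma)}$, which in a single line gives $S_g(x) \ll x^{\max(\alpha,\sigma)} \sum_{m\le x} |h(m)|/m^{\max(\alpha,\sigma)} \le x^{\max(\alpha,\sigma)} \sum_{m=1}^\infty |h(m)|/m^{\sigma}$. But your version is equally valid.
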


As in the proof of Theorem \ref{thm:beta1}, we now turn to determining the convergence properties of the series
\[\sum_{n=1}^\infty \frac{|h(n)|}{n^\sigma}.\]
For any $k\geq 1$, define $D_f(k,p)$ to be the determinant of the $k \times k$ matrix $(a_{ij})$ given by
\[
a_{ij} = \left\{ \begin{array}{ll} f(p^{i-j+1}) & \text{if } i-j+1 \geq 0 \\ 0 & \text{otherwise.} \end{array}\right.
\]
From the definition of $h(n)$, we have that
\[
g(p^k) - f(p^k) = \sum_{j=1}^k f(p^{k-j})h(p^j)
\]
and, incorporating all the powers up to $n$, we may express this in terms of the $n\times n$ matrix $A:=\big(f(p^{i-j})\big)_{i,j\leq n}$ (where we put $f(p^j)=0$ if $j<0$), by
\[
A\cdot \left(h(p), \cdots , h(p^n)\right)^t = \left(g(p)-f(p),\cdots ,g(p^n)-f(p^n)\right)^t.
\]
Observing that $(-1)^kD_f(k,p)$ is the $(n,n-k)$-th entry of the matrix $A^{-1}$, we get that
\[
h(p^n) = \sum_{k=0}^{n-1} (-1)^k(g(p^{n-k})-f(p^{n-k}))D_f(k,p).
\]
Therefore for $\sigma>0$ sufficiently large, we have that
\begin{align*}
\sum_{n=1}^\infty \frac{|h(p^n)|}{p^{n\sigma}} &\leq \sum_{n=1}^\infty \left(\sum_{k=1}^n|f(p^k)-g(p^k)|\cdot|D_f(n-k,p)|\right)p^{-n\sigma}\\
&=\left(\sum_{n=0}^\infty \frac{|D_f(n,p)|}{p^{n\sigma}}\right)\left(\sum_{m=1}^\infty \frac{|f(p^m)-g(p^m)|}{p^{m\sigma}}\right).
\end{align*}
We handle each expression on the right hand side separately.
\begin{lemma}
\label{lem:detbound}
If $f(n) = o(n^\delta)$ and $\sigma>\delta$, then for all but finitely many $p$, the series
\[
\sum_{n=0}^\infty \frac{|D_f(n,p)|}{p^{n\sigma}}
\]
is convergent and uniformly bounded.
\end{lemma}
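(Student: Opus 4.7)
The plan is to encode $D_f(k,p)$ via generating functions and then to apply Cauchy's estimates to the resulting analytic function. Set $F_p(z) := \sum_{n \geq 0} f(p^n) z^n$, which is the symbol of the lower-triangular Toeplitz matrix $A = \bigl(f(p^{i-j})\bigr)_{i,j}$ appearing in the setup above. Because the inverse of such a matrix is itself lower-triangular Toeplitz with symbol $1/F_p(z)$, the identification $(-1)^k D_f(k,p) = (A^{-1})_{n, n-k}$ (which is visibly independent of $n$, as it should be) immediately yields the power-series identity
\[
\frac{1}{F_p(z)} = \sum_{k=0}^\infty (-1)^k D_f(k,p)\, z^k.
\]
Thus controlling $|D_f(k,p)|$ reduces to estimating the Taylor coefficients of $1/F_p(z)$ on an appropriate disk.

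Next I would show that $1/F_p(z)$ is analytic and uniformly bounded on a disk slightly larger than $|z| \leq p^{-\sigma}$, for all but finitely many $p$. Fix an auxiliary $\sigma'$ with $\delta < \sigma' < \sigma$. Because $f(n) = o(n^\delta)$, there is a constant $C$ with $|f(p^n)| \leq C p^{n\delta}$ for all $p, n$, and therefore
\[
|F_p(z) - 1| \leq \sum_{n \geq 1} |f(p^n)|\, |z|^n \leq C \sum_{n \geq 1} p^{-n(\sigma' - \delta)} = \frac{C\, p^{-(\sigma' - \delta)}}{1 - p^{-(\sigma' - \delta)}}
\]
uniformly for $|z| \leq p^{-\sigma'}$. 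Since the right-hand side tends to $0$ as $p \to \infty$, there is a $p_0$ such that $|F_p(z)| \geq 1/2$ on the closed disk $|z| \leq p^{-\sigma'}$ for every $p \geq p_0$; in particular $|1/F_p(z)| \leq 2$ throughout this disk.

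Applying Cauchy's estimate to $1/F_p(z)$ on the circle $|z| = p^{-\sigma'}$ then yields $|D_f(k,p)| \leq 2\, p^{k\sigma'}$ for every $k \geq 0$ and every $p \geq p_0$, so that
\[
\sum_{k=0}^\infty \frac{|D_f(k,p)|}{p^{k\sigma}} \leq 2 \sum_{k=0}^\infty p^{-k(\sigma - \sigma')} = \frac{2}{1 - p^{-(\sigma - \sigma')}},
\]
which is convergent and bounded uniformly for $p \geq p_0$, since the denominator is bounded away from $0$ as $p \to \infty$. The only delicate point is the very first step: justifying that $D_f(k,p)$ really is (up to the sign $(-1)^k$) the $k$-th coefficient of $1/F_p(z)$. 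But this follows immediately from the Toeplitz structure of $A$ and the fact that the text has already identified $(-1)^k D_f(k,p)$ with $(A^{-1})_{n,n-k}$, so the rest of the argument is just elementary complex analysis.
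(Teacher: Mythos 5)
Your proof is correct but takes a genuinely different route from the paper's. The paper bounds $|D_f(n,p)|$ combinatorially: it introduces $M(k,p)$, the maximum determinant among $k\times k$ matrices $(a_{ij})$ with $|a_{ij}|\leq p^{(i-j+1)\delta}$, derives the recursion $M(k+1,p)\leq 2p^\delta M(k,p)$ by cofactor expansion (factoring a power of $p^\delta$ out of the one row whose bound would otherwise degrade), and concludes $|D_f(n,p)|\leq (2p^\delta)^n$, so that the series is geometric with ratio $2p^{-(\sigma-\delta)}<1$ once $p>2^{1/(\sigma-\delta)}$. You instead exploit the Toeplitz structure analytically: since $A^{-1}$ is lower-triangular Toeplitz with symbol $1/F_p(z)$, the quantities $(-1)^kD_f(k,p)$ are exactly the Taylor coefficients of $1/F_p(z)$, and a routine estimate shows $|1/F_p(z)|\leq 2$ on $|z|\leq p^{-\sigma'}$ for any fixed $\sigma'\in(\delta,\sigma)$ once $p$ is large, so Cauchy's inequality gives $|D_f(k,p)|\leq 2p^{k\sigma'}$. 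The final bounds are essentially the same strength. The paper's argument is purely elementary; yours is arguably more transparent and structural (it explains \emph{why} the determinants behave geometrically, rather than discovering it by induction), at the cost of invoking a bit of complex analysis and the Toeplitz-inverse-is-Toeplitz fact, which itself deserves a sentence of justification via the power-series inversion relation $\sum_m a_m b_{\ell-m}=\delta_{\ell,0}$.
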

\begin{proof}
Let $M(k,p)$ be the maximum of the absolute value of the determinants of the $k \times k$ matrices $(a_{ij})$ which satisfy
\[
|a_{ij}| \leq \left\{ \begin{array}{ll} p^{(i-j+1)\delta} & \text{if } i-j+1 \geq 0 \\ 0 & \text{otherwise.} \end{array} \right.
\]
Then, we observe that
\[
M(k+1,p) \leq 2p^\delta M(k,p) 
\]
by cofactor expansion, and that $M(1,p)=p^\delta$. It therefore follows that
\[
M(k,p) \leq 2^{k-1}p^{k\delta},
\]
which implies that the bound
\[
|D_f(n,p)| < (2p^\delta)^n
\]
holds for all but finitely many $p$.
\end{proof}
\begin{lemma}
\label{lem:largep}
Assume that both $f(n) = o(n^\delta)$ and $g(n) = o(n^\delta)$. If $\sigma$ and $k$ are chosen such that $\sigma >1/k + \delta$, then for some sufficiently large $N$, the series
\[
\sum_{p>N} \sum_{n=k}^\infty \frac{|f(p^n)-g(p^n)|}{p^{n\sigma}}
\]
is convergent.
\end{lemma}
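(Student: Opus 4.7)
The plan is to proceed by direct estimation using the growth hypotheses. Since $f(n),g(n)=o(n^\delta)$, for any chosen absolute constant (say $1$) we may pick $N_0$ so that $|f(m)|,|g(m)|\leq m^\delta$ whenever $m\geq N_0$. Taking $N\geq N_0$, every $p>N$ and every $n\geq 1$ satisfies $p^n\geq N_0$, and hence
\[
|f(p^n)-g(p^n)|\leq 2p^{n\delta}.
\]

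The next step is a geometric-series estimate. For $p>N$ chosen large enough that $p^{\delta-\sigma}\leq 1/2$ (possible since $\sigma>\delta$), we obtain
\[
\sum_{n=k}^\infty \frac{|f(p^n)-g(p^n)|}{p^{n\sigma}}\;\leq\; 2\sum_{n=k}^\infty p^{n(\delta-\sigma)}\;=\;\frac{2\,p^{k(\delta-\sigma)}}{1-p^{\delta-\sigma}}\;\ll\; p^{-k(\sigma-\delta)}.
\]
Summing over primes $p>N$ then reduces the problem to convergence of $\sum_{p>N}p^{-k(\sigma-\delta)}$.

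The hypothesis $\sigma>1/k+\delta$ is exactly what makes the exponent $k(\sigma-\delta)$ strictly greater than $1$, so $\sum_{p>N}p^{-k(\sigma-\delta)}$ converges by comparison with $\sum_{n}n^{-k(\sigma-\delta)}$. This immediately gives the claim. No serious obstacle arises; the only place care is needed is in choosing $N$ large enough to simultaneously activate the $o(n^\delta)$ bound uniformly for every prime power $p^n$ with $p>N$, $n\geq k$, and to make the geometric series sum behave like its leading term. Both conditions are achieved by taking $N$ sufficiently large, depending only on $\delta$, $\sigma$, and the implicit thresholds in the $o(\cdot)$ estimates for $f$ and $g$.
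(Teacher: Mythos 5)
Your proof is correct and takes essentially the same approach as the paper; the paper's own proof merely records the key bound $|f(p^n)-g(p^n)|\leq p^{n\delta}$ for large $p$ and leaves the geometric-series and prime-sum estimates implicit, which you have simply written out in full.
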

\begin{proof}
By assumption, we have that
\[
|f(p^n)-g(p^n)| \leq p^{n\delta}
\]
for all sufficiently large $p$.
\end{proof}

With these two lemmas in hand, we are now able to complete the proof of Theorem \ref{thm:beta2}.

\begin{proof}[Proof of Theorem \ref{thm:beta2}]
Let $f(n)$ and $g(n)$ be multiplicative such that $f(n)=o(n^\delta)$ and $g(n)=o(n^\delta)$, and such that $f(n)$ and $g(n)$ are $(\beta,k)$-strongly pretentious.  Moreover, suppose that $S_f(x)\ll x^\alpha$ and $\sigma>1/(k+1)+\delta$ is such that $\sigma\geq\max(\alpha,\beta)$.  By applying Lemmas \ref{lem:detbound} and \ref{lem:largep}, we can find $C,Y>0$ large enough such that the bound
\[
\sum_{n=0}^\infty \frac{|D_f(n,p)|}{p^{n\sigma}} < C
\]
holds for all $p>Y$ and the series
\[
\sum_{p>Y} \sum_{n=k+1}^\infty \frac{|f(p^n)-g(p^n)|}{p^{n\sigma}}
\]
is convergent. It follows that
\begin{align*}
\sum_{p>Y}\sum_{n=1}^\infty \frac{|h(p^n)|}{p^{n\sigma}} &\leq \sum_{p>Y}\left(\sum_{n=0}^\infty \frac{|D_f(n,p)|}{p^{n\sigma}}\sum_{m=1}^\infty \frac{|f(p^m)-g(p^m)|}{p^{m\sigma}}\right)\\
&< C\sum_{p>Y}\sum_{m=1}^\infty \frac{|f(p^m)-g(p^m)|}{p^{m\sigma}}\\
&\leq C\left(\hat{\mathbb{D}}_{\beta,k}(f,g)+\sum_{p>Y} \sum_{n=k+1}^\infty \frac{|f(p^n)-g(p^n)|}{p^{n\sigma}}\right)\\
&<\infty.
\end{align*}
Hence, if the series
\[
\sum_{p\leq Y}\sum_{n=1}^\infty \frac{|h(p^n)|}{p^{n\sigma}} 
\]
is convergent, we have that
\[
\sum_{n=1}^\infty \frac{|h(n)|}{n^\sigma} <\infty,
\] 
and by applying Proposition \ref{prop:strongcancel}, we deduce the theorem.
\end{proof}

\subsection{Proof of Theorem \ref{thm:degreed}}
\label{subsec:thm:degreed}
Suppose that $f(n)$ and $g(n)$ are multiplicative functions of degree $d$, and that $\hat{\mathbb{D}}_{\beta,d}(f,g)<\infty$.  To prove Theorem \ref{thm:degreed}, we must show that the series 
\[\sum_{n=1}^\infty \frac{|h(n)|}{n^\sigma}\]
is convergent for $\sigma\geq\beta$.  From the Euler product for $|h(n)|$, we have that the above series converges if and only if the series
\[\sum_p \sum_{n=1}^\infty\frac{|h(p^n)|}{p^{n\sigma}}\]
converges.  Now, recall for each prime $p$, that
\[\sum_{n=1}^\infty \frac{|h(p^n)|}{p^{n\sigma}} \leq \left(\sum_{n=0}^\infty \frac{|D_f(n,p)|}{p^{n\sigma}} \right)\left(\sum_{m=1}^\infty \frac{|f(p^m)-g(p^m)|}{p^{m\sigma}}\right),\]
where $D_f(n,p)$ is as in Section \ref{subsec:thm:beta2}.  We therefore break the proof into two lemmas, from which the theorem will immediately follow.

\begin{lemma}
\label{lem:degreeddet}
If $f(n)$ is a degree $d$ multiplicative function, then, for all $p$, the series
\[\sum_{n=0}^\infty \frac{|D_f(n,p)|}{p^{n\sigma}}\]
converges and is bounded independent of $p$.
\end{lemma}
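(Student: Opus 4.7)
The plan is to interpret $D_f(n,p)$ algebraically as a coefficient of the Dirichlet inverse $f^{-1}$ of $f$ at $p^n$, and then exploit the degree-$d$ structure, which forces the local Euler factor of $L(s,f^{-1})$ at each prime to be a polynomial of degree at most $d$ in $p^{-s}$. This bypasses the determinantal estimate of Lemma \ref{lem:detbound} and gives a bound uniform in $p$ (indeed, one that holds for every prime, not just cofinitely many).

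First I would establish the identity $D_f(n,p) = (-1)^n f^{-1}(p^n)$. This follows directly from the derivation already given in Section \ref{subsec:thm:beta2}: the relation $g = f*h$ is equivalent to $h = f^{-1}*g$, so for $n\geq 1$ we have
\[
h(p^n) = \sum_{k=0}^{n-1} f^{-1}(p^k)\bigl(g(p^{n-k}) - f(p^{n-k})\bigr),
\]
where the $k=n$ term drops since $g(1)-f(1)=0$. Comparing this with the formula $h(p^n) = \sum_{k=0}^{n-1}(-1)^k(g(p^{n-k})-f(p^{n-k}))D_f(k,p)$ matches the coefficients term by term.

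Next I would compute $f^{-1}(p^n)$ using the hypothesis $f = f_1 * \cdots * f_d$ with each $f_i$ completely multiplicative of modulus at most $1$. Then
\[
L(s,f^{-1}) = \frac{1}{L(s,f)} = \prod_p \prod_{i=1}^d \bigl(1 - f_i(p) p^{-s}\bigr) = \prod_p \sum_{n=0}^d (-1)^n e_n(f_1(p),\ldots,f_d(p)) p^{-ns},
\]
where $e_n$ denotes the $n$-th elementary symmetric polynomial. Matching local Euler factors at $p$ gives $f^{-1}(p^n) = (-1)^n e_n(f_1(p),\ldots,f_d(p))$ for $0\leq n\leq d$ and $f^{-1}(p^n) = 0$ for $n > d$. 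Since $|f_i(p)|\leq 1$, we obtain the uniform bound $|f^{-1}(p^n)|\leq \binom{d}{n}$, independent of $p$.

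Combining the two steps reduces the sum in question to a finite one:
\[
\sum_{n=0}^\infty \frac{|D_f(n,p)|}{p^{n\sigma}} = \sum_{n=0}^d \frac{|f^{-1}(p^n)|}{p^{n\sigma}} \leq \sum_{n=0}^d \binom{d}{n} = 2^d,
\]
which is the desired uniform bound. There is essentially no obstacle in this argument; the only subtlety is the identification in the first step, after which everything follows from a direct calculation of the local Euler factor. One could alternatively avoid Dirichlet series by proving both the vanishing $f^{-1}(p^n)=0$ for $n>d$ and the bound $|f^{-1}(p^n)|\leq \binom{d}{n}$ directly by induction on $n$ using the convolution recurrence $\sum_{k=0}^n f(p^{n-k})f^{-1}(p^k) = 0$, but the Euler-product approach is substantially cleaner.
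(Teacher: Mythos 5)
Your proof is correct, and it takes a cleaner and more explicit route than the paper's. The paper establishes $D_f(k,p)=0$ for $k>d$ indirectly, by noting that $h(p^n)$ is a linear form in $g(p^{n-k})-f(p^{n-k})$ with coefficients $(-1)^kD_f(k,p)$, and separately that the degree-$d$ Euler factor $\prod_{i=1}^d(1-f_i(p)p^{-s})$ forces this linear form to involve only $g(p^i)$ for $n-d\leq i\leq n$; it then merely asserts that the surviving $D_f(k,p)$, $k\leq d$, are bounded uniformly in $p$. You instead make the key identification $D_f(n,p)=(-1)^n f^{-1}(p^n)$ explicit, and then read off from the Euler factor that $f^{-1}(p^n)=(-1)^n e_n(f_1(p),\dots,f_d(p))$, which gives both the vanishing for $n>d$ and the clean uniform bound $|D_f(n,p)|\leq\binom{d}{n}$ in one stroke (hence $\sum_n|D_f(n,p)|p^{-n\sigma}\leq 2^d$). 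The identification in your first step is sound: comparing $h=f^{-1}*g$ with the paper's determinantal formula, both linear in the free quantities $g(p^{n-k})-f(p^{n-k})$, forces $f^{-1}(p^k)=(-1)^kD_f(k,p)$. Both approaches ultimately rest on the same degree-$d$ Euler product structure, but yours replaces the paper's ``two observations to combine'' with a single explicit formula, and it supplies the quantitative bound that the paper leaves to the reader.
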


\begin{proof}
Given any multiplicative function $g(n)$, we established in the discussion leading to Lemmas \ref{lem:detbound} and \ref{lem:largep}, that if $g(n)=(f*h)(n)$, that
\[h(p^n)=\sum_{k=0}^{n-1} (-1)^k(g(p^{n-k})-f(p^{n-k}))D_f(k,p),\]
which we think of as a linear polynomial in the variables $g(p^i)$ for $i=1,\dots,n$.  We note in particular that the coefficient of $g(p^{n-j})$ is $D_f(j,p)$ for all $j$.  On the other hand, from the definition of $h(n)$, we have the Euler product identity
\[\prod_p \left(\sum_{n=0}^\infty h(p^n)p^{-ns}\right) = \prod_p \left(\sum_{n=0}^\infty g(p^n)p^{-ns}\right)\left(1-f_1(p)p^{-s}\right)\dots\left(1-f_d(p)p^{-s}\right),\]
where the $f_i(n)$ are the constituent completely multiplicative functions of $f(n)$.  Thus, $h(p^n)$ can be expressed as a linear combination of the variables $g(p^i)$ for $i=n-d,\dots,n$.  Combining these two observations, we conclude that $D_f(k,p)=0$ for $k\geq d+1$.  The result follows by noting that each of the $D_f(k,p)$ for $k\leq d$ can be bounded independent of $p$.
\end{proof}

\begin{lemma}
\label{lem:degreeddist}
Let $f(n)$ and $g(n)$ be degree $d$ multiplicative functions, and suppose that $\hat{\mathbb{D}}_{\beta,d}(f,g)<\infty$.  Then $\hat{\mathbb{D}}_{\beta,\infty}(f,g)<\infty.$
\end{lemma}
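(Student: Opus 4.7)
The plan is to work at each prime $p$ through the local generating series $F_p(x):=\sum_{n\geq 0} f(p^n) x^n$ and $G_p(x):=\sum_{n\geq 0} g(p^n) x^n$.  Because $f,g\in\mathcal{S}_d$, these are rational of the form $F_p=1/Q_f$ and $G_p=1/Q_g$, where $Q_f(x)=\prod_{i=1}^d(1-f_i(p)x)$ and $Q_g$ is defined analogously.  Both are polynomials of degree at most $d$ with constant term $1$, whose remaining coefficients are (up to sign) elementary symmetric functions in the $f_i(p)$ or $g_i(p)$, hence bounded in absolute value by $\binom{d}{k}$.  The identity
\[
F_p(x) - G_p(x) = \bigl(Q_g(x)-Q_f(x)\bigr)F_p(x)G_p(x)
\]
reads coefficient-wise, for $n\geq 1$, as the convolution formula
\[
f(p^n)-g(p^n) = \sum_{a=1}^{\min(d,n)} r_a(p)\sum_{b+c=n-a} f(p^b)g(p^c),\qquad r_a(p):=[x^a](Q_g-Q_f).
\]

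Next I would establish a Lipschitz-type bound for $r_a(p)$ in terms of $|f(p^i)-g(p^i)|$, $i\leq d$.  Solving $Q_f(x)F_p(x)=1$ recursively expresses each coefficient $[x^k]Q_f$ with $1\leq k\leq d$ as a fixed integer-coefficient polynomial in $f(p),\dots,f(p^k)$ (the Newton--Girard formulas), and the same polynomial expresses $[x^k]Q_g$ in $g(p),\dots,g(p^k)$.  Because $|f(p^i)|,|g(p^i)|\leq \binom{i+d-1}{d-1}$ uniformly in $p$ for $i\leq d$, the mean-value theorem on this bounded polynomial domain gives
\[
|r_a(p)|\ll_d \sum_{i=1}^d |f(p^i)-g(p^i)|,\qquad 1\leq a\leq d.
\]
Combining this with the uniform bound $|f(p^b)|\leq\binom{b+d-1}{d-1}$ (and likewise for $g$) and the elementary convolution identity $\sum_{b+c=m}\binom{b+d-1}{d-1}\binom{c+d-1}{d-1}=\binom{m+2d-1}{2d-1}$, the display above yields the pointwise estimate
\[
|f(p^n)-g(p^n)|\ll_d (n+1)^{2d-1}\sum_{i=1}^d |f(p^i)-g(p^i)|.
\]

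The final step is analytic.  Since $\sum_{n>d}(n+1)^{2d-1}/p^{n\beta}\ll_{d,\beta} p^{-(d+1)\beta}$ uniformly for $p\geq 2$ (factor out $p^{-(d+1)\beta}$ and bound the remaining sum by its value at $p=2$), and since trivially $\sum_{i=1}^d |f(p^i)-g(p^i)|\leq p^{d\beta}\sum_{i=1}^d |f(p^i)-g(p^i)|/p^{i\beta}$, I obtain
\[
\sum_{n>d}\frac{|f(p^n)-g(p^n)|}{p^{n\beta}}\ll_{d,\beta} p^{-\beta}\sum_{i=1}^d \frac{|f(p^i)-g(p^i)|}{p^{i\beta}}.
\]
Summing over $p$ bounds $\sum_p\sum_{n>d}|f(p^n)-g(p^n)|/p^{n\beta}$ by a constant multiple of $\hat{\mathbb{D}}_{\beta,d}(f,g)<\infty$, and adding in the hypothesis itself yields $\hat{\mathbb{D}}_{\beta,\infty}(f,g)<\infty$.

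The main obstacle I expect is pinning down the Lipschitz bound on $r_a(p)$ cleanly; once that is in hand the remainder is routine bookkeeping.  An alternative would be to iterate the linear recurrence $f(p^n)=\sum_{k=1}^d (-1)^{k+1} e_k(f_1(p),\dots,f_d(p))\,f(p^{n-k})$ valid for $n>d$ directly, but the generating-function approach produces the convolution bound $\binom{m+2d-1}{2d-1}$ in a single Cauchy product and seems cleaner.
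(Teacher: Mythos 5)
Your proposal is correct, and it takes a genuinely different route from the paper's proof, though both rest on the same underlying observation: the coefficients of $Q_f$ (your $[x^a]Q_f$, the paper's $(-1)^a\alpha_a(f,p)$) are, uniformly in $p$, Lipschitz functions of $f(p),\dots,f(p^d)$ on the relevant bounded domain, so that $|r_a(p)|\ll_d \sum_{i\leq d}|f(p^i)-g(p^i)|$. Where you diverge is in how that Lipschitz bound is converted into control of the tail $\sum_{n>d}$. The paper writes the degree-$d$ linear recurrence for $f(p^n)-g(p^n)$ as a one-step reduction, which produces an inequality of the shape $\sum_{n\geq d+1}\ll p^{-\sigma}\sum_{n\geq 1}$ and then requires an absorption step (valid only for $p$ large, with small primes handled separately) to close the estimate. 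You instead use the identity $F_p-G_p=(Q_g-Q_f)F_pG_p$ and read off coefficients in a single Cauchy product, arriving at the clean pointwise bound $|f(p^n)-g(p^n)|\ll_d (n+1)^{2d-1}\sum_{i\leq d}|f(p^i)-g(p^i)|$ with no self-reference to absorb. What each buys: the paper's iterate-and-absorb argument needs a bit more bookkeeping and a ``for $p$ sufficiently large'' caveat, but requires only the recursion itself; your generating-function route is more direct and gives a uniform-in-$p$ statement immediately, at the modest cost of introducing the Vandermonde convolution $\sum_{b+c=m}\binom{b+d-1}{d-1}\binom{c+d-1}{d-1}=\binom{m+2d-1}{2d-1}$. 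One small terminological note: the relation you need between the complete homogeneous symmetric functions $f(p^k)$ and the elementary symmetrics is the identity $\sum_{j=0}^k(-1)^j e_j h_{k-j}=0$ (the paper's displayed relation $\sum_j (-1)^j r^d_{k-j} q^d_j=0$), not strictly the Newton--Girard formulas for power sums, but the recursive solvability you invoke is the same.
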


\begin{proof}
We begin with some general notation.  For any given pair of integers $k,d\geq 0$, define the homogeneous symmetric polynomials of degree $k$ in $d$ variables $r_k^d$ and $q_k^d$ by
\[
r_k^d(x_1, \cdots, x_d) := \left\{ \begin{array}{ll} 1, & \text{if } k=0 \\\displaystyle  \sum_{1\leq i_1 < \cdots < i_k \leq d} x_{i_1} x_{i_2} \cdots x_{i_k}, & \text{if } 1 \leq k\leq d \\
0, & \text{if } k>d, \end{array} \right.
\]
 and
\[
q_k^d(x_1,\cdots, x_d) := \sum_{j_1+ \cdots +j_d=k} x_1^{j_1}x_2^{j_2} \cdots x_d^{j_d}.
\]
Then for an auxiliary variable $X$, we have that
\begin{align*}
\sum_{k=0}^\infty q_k^d X^k &= \prod_{j=1}^n \left( \sum_{k=0}^\infty x_j^kX^k\right) \\
&=\prod_{j=1}^n \left(1-x_jX\right)^{-1}\\
&=\left(\sum_{k=0}^d (-1)^kr_k^dX^k\right)^{-1}.
\end{align*}
This implies that
\[
\sum_{j=0}^k (-1)^j r_{k-j}^d q_j^d =0
\]
holds for $k \geq 1$. From this and using the fact that $\{r_k^d\}_{k=0,1,\cdots,d}$ are algebraically independent, we see that $\{q_k^d\}_{k=0,1,\cdots,d}$ are algebraically independent. Therefore we can find polynomials $p_1, \cdots, p_d$ such that
\[
p_k(q_1^d, \cdots, q_d^d) = r_k^d
\]
for any $0 \leq k \leq d$.

Now, if $f(n)$ is a multiplicative function of degree $d$, we set $\alpha_0(f,p)=1$ and put
\[
\alpha_k(f,p) := p_k(f(p), \cdots, f(p^d))
\]
for $k=1,\cdots, d$. Then
\[
\sum_{k=0}^d (-1)^k\alpha_k(f,p) f(p^{n+d-k})=0
\]
for any $p$ and $n\geq 0$. In particular, for any multiplicative functions $f(n)$ and $g(n)$ of degree $d$, we have that
\[
|\alpha_k(f,p)-\alpha_k(g,p)| \ll_d |f(p)-g(p)|+|f(p^2)-g(p^2)|+ \cdots +|f(p^d)-g(p^d)|
\]
for any $k=1,\cdots,d$ and $p$.  We are now ready to prove the lemma.  Assume that $n\geq d+1$. Observing that $f(p^n) \ll_d n^{d-1}$ and $\alpha_k(f,p) \ll_d 1$, we have
\begin{align*}
|f(p^n)-g(p^n)| = &\left|\sum_{k=1}^d (-1)^k\alpha_k(f,p) f(p^{n-k})+(-1)^k\alpha_k(g,p) g(p^{n-k})\right|\\
\ll_d & \sum_{k=1}^d \left|\alpha_k(f,p) f(p^{n-k})+\alpha_k(g,p) g(p^{n-k})\right|\\
\ll_d & \sum_{k=1}^d \left|\alpha_k(f,p)\left(f(p^{n-k})-g(p^{n-k})\right)|+|g(p^{n-k})\left(\alpha_k(f,p)-\alpha_k(g,p)\right)\right|\\
\ll_d & \sum_{k=1}^d  \left|f(p^{n-k})-g(p^{n-k})\right| \\
&+ n^{d-1}\left(|f(p)-g(p)|+ \cdots +|f(p^d)-g(p^d)|\right),
\end{align*}

Since 
\[
\sum_{n=1}^\infty \frac{|f(p^n)-g(p^n)|}{p^{n\sigma}} \ll_d \sum_{n=1}^\infty \frac{n^{d-1}}{p^{n\sigma}}
\]
is convergent, this inequality leads to
\begin{align*}
\sum_{n=d+1}^\infty \frac{|f(p^n)-g(p^n)|}{p^{n\sigma}} \ll_d &\sum_{k=1}^d  \sum_{n=d+1}^\infty\frac{|f(p^{n-k})-g(p^{n-k})|}{p^{n\sigma}} \\
&+\sum_{n=d+1}^\infty \frac{n^{d-1}}{p^{n\sigma}}\big(|f(p)-g(p)|+ \cdots +|f(p^d)-g(p^d)|\big)\\
\ll_d & \frac{1}{p^\sigma}\sum_{n=1}^\infty \frac{|f(p^n)-g(p^n)|}{p^{n\sigma}}.
\end{align*}
Therefore for all sufficiently large $p$, we have
\[
\sum_{n=d+1}^\infty \frac{|f(p^n)-g(p^n)|}{p^{n\sigma}} \ll_d  \sum_{n=1}^d \frac{|f(p^n)-g(p^n)|}{p^{n\sigma}}.
\]
By summing over $p$, we get the conclusion.
\end{proof}

\subsection{Squarefree sieving}
\label{subsec:squarefree}

Let $\chi$ be a quadratic character and let $\tilde{\chi}$ be given by
\[
\tilde{\chi}(n) := \left\{
\begin{array}{ll}
\chi(n) & \text{if $n$ is squarefree}\\
0 & \text{otherwise.}
\end{array} \right.
\]
Then Theorem \ref{thm:beta2} applies to $\chi$ and $\tilde{\chi}$ with any $\alpha>0$, $\beta>0$, and $\delta>0$, leading to 
\[
S_{\tilde{\chi}}(X) = O(X^{1/2+\epsilon})
\]
for any $\epsilon>0$. Now we remark that if $1/4<\gamma<1/2$ is fixed, the statement that
\[
S_{\tilde{\chi}}(X) = O(X^{\gamma+\epsilon})
\]
holds for all $\chi$ and for all $\epsilon>0$, is equivalent to the quasi-Riemann hypothesis for $2\gamma$, that $\zeta(s)$ has no zeros in the region $\Re(s)>2\gamma$. To see this, note that
\[
L(s,\tilde{\chi}) = L(s,\chi) \zeta(2s)^{-1}
\]
where
\[
L(s,\tilde{\chi}) = \prod_p \left(1+\frac{\chi(p)}{p^s}\right).
\]
If the quasi-Riemann hypothesis were true for $2\gamma$, then, for all $\chi$, $L(s,\tilde{\chi})$ would have no pole on $\mathrm{Re}(s)>\gamma$, and Perron's formula applies to show the result. On the other hand, if 
\[
S_{\tilde{\chi}}(X) = O(X^{\gamma+\epsilon})
\]
holds for all $\chi$ with some fixed $\gamma$, then 
\[
L(\rho,\chi)=0
\]
for any $\rho$ such that $\zeta(2\rho)=0$ and $\Re(\rho)>\gamma$. Since all such $\rho$ satisfy $\Re(\rho)<1/2$, by the prime number theorem, $\rho$ is the common zero of all $L(s,\chi)$ off the critical line. Now applying a zero density estimate, we see that there can not be any such zero, hence $\zeta(s)\neq 0$ for $1>\mathrm{Re}(s)>2\gamma$.

\bibliographystyle{abbrv}
\bibliography{power}
\end{document}